\newtheorem{theorem}{Theorem}[section]
\newtheorem{lemma}{Lemma}
\newtheorem{corollary}{Corollary}
\newtheorem{remark}{Remark}[section]
\numberwithin{equation}{section}
\author{Mirko D'Ovidio}
\address{Department of Basic and Applied Sciences for Engineering,\newline \indent Sapienza University of Rome,  
\newline \indent via A. Scarpa 10, Rome, Italy}
\email[Corresponding author]{mirko.dovidio@uniroma1.it}
\title{Fractional boundary value problems}
\begin{document}

\maketitle

\begin{abstract}
We study some functionals associated with a process driven by a fractional boundary value problem (FBVP for short). By FBVP we mean a Cauchy problem with boundary condition written in terms of a fractional equation, that is an equation involving time-fractional derivative in the sense of Caputo. We focus on lifetimes and additive functionals characterizing the boundary conditions. We show that the corresponding additive functionals are related to the fractional telegraph equations. Moreover, the fractional order of the derivative gives a unified condition including the elastic and the sticky cases among the others.
\end{abstract}

{\bf Keywords:} fractional boundary conditions, Robin conditions, Wentzell conditions, elastic Brownian motions, sticky Brownian motions, lifetimes, telegraph processes, Mittag-Leffler random variables

\section{Introduction}
 \label{sec:1}

\setcounter{section}{1}
\setcounter{equation}{0}\setcounter{theorem}{0}

In this work we focus on the fractional boundary value problem
\begin{equation}
\label{PDEprocessMain}
\begin{cases}
\displaystyle \frac{\partial u}{\partial t} = \Delta u, \quad t>0,\; x \in [0, \infty)\\ 
\displaystyle \eta D^{\alpha/2}_t u(t,0) = \sigma \frac{\partial u}{\partial x} (t, 0) -c\, u(t,0), \quad t>0\\
\displaystyle u(0,x) = f(x), \quad x>0
\end{cases}
\end{equation}
(we denote by $\Delta$ the operator $\partial^2_{xx}$) with $\alpha \in (0,1]$, $\eta \geq 0$ $\sigma\geq 0$, $c\geq 0$ and
\begin{align*}
D^{\alpha/2}_t u(t,x) = \frac{1}{\Gamma(1-\alpha/2)} \int_0^t \frac{\partial u}{\partial s}(s,x)\, (t-s)^{-\alpha/2} ds,
\end{align*} 
that is the Caputo derivative. We are interested in the probabilistic representation of the solution and the associated functionals.  \\

This problem has been inspired by the following observation. Let us consider the Cauchy problem
\begin{align*}
\frac{\partial u}{\partial t}(t,x) = \Delta u(t,x), \quad t\geq 0,\; x \in [0, \infty), \quad u(0,x)=f(x) 
\end{align*}
with the boundary condition
\begin{align*}
\eta D^{\beta}_t u(t,0) = \sigma \frac{\partial u}{\partial x} (t, 0) -c\, u(t,0), \quad t>0
\end{align*}
where $\beta \in (0,1]$. The Caputo derivative enjoys the  continuity property w.r. to the order $\beta$. In particular, as $\beta \uparrow 1$ the Caputo derivative becomes $D^1_t u=\partial u/ \partial t$ and the problem \eqref{PDEprocessMain} can be intuitively regarded as the Cauchy problem associated with the domain
\begin{align}
\label{DomainSticky}
\left\lbrace \varphi \in C^2([0, \infty)) \to \mathbb{R} :  \eta \varphi^{\prime \prime}(0^+) = \sigma \varphi^\prime(0^+) - c \varphi(0^+)  \right\rbrace .
\end{align}
As $\beta \to 1/2$ we have that \eqref{PDEprocessMain} must be associated with the set of functions
\begin{align}
\label{DomainElastic}
\left\lbrace \varphi \in C^2([0, \infty)) \to \mathbb{R} :  -\eta \varphi^{\prime }(0^+) = \sigma \varphi^\prime(0^+) - c \varphi(0^+)  \right\rbrace
\end{align}
where $-\eta \varphi^\prime$ is somehow obtained from the fact that $\eta D^1_t u = \eta \Delta u$ and $\eta D^\beta_t u$ coincides, for $\beta=1/2$, with $-\eta \partial_x u$ (at $x=0$). Thus, the fractional boundary condition involving the Caputo derivative introduces a characterization of the associated process given in terms of an elastic Brownian motion for $\beta \in (0,1/2]$ and in terms of a Sticky Brownian motion for $\beta \in (1/2,1]$. The domains \eqref{DomainSticky} and \eqref{DomainElastic} are often associated with the Wentzell and Robin boundary conditions. The domain \eqref{DomainElastic} suggests also an interesting balance between $\eta$ and $\sigma$ in terms of reflection, especially for negative $\sigma$. Concerning \eqref{DomainSticky} and the problem with $\beta \in (1/2,1]$, the probabilistic representation is still an open problem. The author is working in this direction. \\

In order to give a clear picture about our results we first underline the main aspects relating such results with the well-known theory of time-changed processes. Let us consider the Cauchy problem  
\begin{equation}
\label{CP}
\begin{cases}
\displaystyle \frac{\partial u}{\partial t} = \Delta u\\
\displaystyle u(0,x) = f(x)
\end{cases}
\end{equation} 
for which we have the probabilistic representation of the solution given by $\mathbf{E}_x[f(\widetilde{X}_t)]$. The process $\widetilde{X}$ is a Brownian motion with generator $(\Delta, D(\Delta))$ and the boundary conditions can be associated with a multiplicative functional $M_t = \exp A_t$ where $A_t$ is an additive functional (\cite{BluGet68}). Thus, the solution to \eqref{CP} has the representation
\begin{align*}
\mathbf{E}_x[f(\widetilde{X}_t)]= \mathbf{E}_x[f(X_t) \, M_{t}]
\end{align*} 
where $\widetilde{X}$ is the part process of $X$ in a domain according with $M_t$. For $\alpha \in (0,1]$, the fractional Cauchy problem
\begin{equation}
\label{FCP}
\begin{cases}
\displaystyle D^\alpha_t u = \Delta u\\
\displaystyle u(0,x) = f(x)
\end{cases}
\end{equation} 
has been extensively investigated by many researchers. It is well-known that the time-changed process $\widetilde{X} \circ L$ can be considered in order to solve \eqref{FCP} where $\widetilde{X}$ has generator $(\Delta, D(\Delta))$ and the independent random time $L$ is an inverse to a stable subordinator $H$. The random clock $L$ for the base process $X$ seems to introduce a delaying effect along the paths of $X$. The additive functional turns out to be time-changed as well, in place of $A$ we have to consider $ A\circ L$, that is 
\begin{align}
\label{ALfunctional}
A_{L_t}
\end{align} 
which characterizes the boundary behaviour of the time-changed process. As $\alpha\to 1$ the problem \eqref{FCP} becomes the problem \eqref{CP}. This is due to the fact that $L_t \to t$ almost surely as $\alpha \to 1$. In the special case $\alpha/2 \to 1$, the problem \eqref{PDEprocessMain} takes a completely different face. In the literature the following slightly modified version of \eqref{PDEprocessMain} has been considered (\cite{SW05, SW06})
\begin{equation}
\label{PDEprocessSticky}
\begin{cases}
\displaystyle \frac{\partial u}{\partial t} = \mu \nabla u + \Delta u\\
\displaystyle \frac{\partial u}{\partial t}(t,0) = \sigma \frac{\partial u}{\partial x}(t,0), \quad t>0\\
\displaystyle u(0,x)= f(x)
\end{cases}
\end{equation}
with $\mu, \sigma \in \mathbb{R}$ (we denote by $\nabla u$ the first derivative of $u$). The ideal interpretation of \eqref{PDEprocessSticky} can be given by considering the set of functions
\begin{align}
\label{anlogyDomain}
\left\lbrace \varphi \in C^2([0, \infty)) \to \mathbb{R} :  \mu \varphi^\prime(0^+) + \varphi^{\prime \prime}(0^+) = \sigma \varphi^\prime(0^+)  \right\rbrace
\end{align}
from which the sticky condition immediately emerges. For $\mu=0$, the sticky condition introduces the time change
\begin{align}
\label{RandomTimeV}
V_t = t + \frac{1}{\sigma} \gamma_t
\end{align}
where $\gamma_t$ is a local time at zero. In particular, a reflecting Brownian motion time-changed by the inverse $V^{-1}_t$ can be considered in order to solve the heat equation with sticky condition at zero.\\

In the present work, the first result we provide is that the formula \eqref{ALfunctional} does not hold. Indeed, the functional associated with the fractional boundary condition in \eqref{PDEprocessMain} is a composition as expected but it turns out to be 
\begin{align}
\label{TAfunctional}
\bar{L}_{A_t}
\end{align}
where $\bar{L}$ is an inverse process playing the role of $L$ in \eqref{ALfunctional}. Moreover, we show that $\bar{L}$ is an inverse to the process
\begin{align*}
\bar{H}_t = \frac{\sigma}{\eta} t + H_t
\end{align*} 
which is therefore very close to the probabilistic interpretation of the problem \eqref{PDEprocessSticky}. The process $\bar{L}$ can be associated to a fractional telegraph equation. We recall that $L$ in \eqref{ALfunctional} is an inverse to $H$, thus the composition \eqref{TAfunctional} maintains the same structure of \eqref{ALfunctional}. In particular, if $\sigma=0$, then the random times $\bar{L}$ and $L$ are identical in law. Only in this case, $\sigma=0$, we have that $L \circ A$ equals in law $A \circ L$. In \eqref{PDEprocessMain}, $\alpha/2 \in (0, 1/2]$ and therefore we are not able (up to now) to obtain \eqref{PDEprocessSticky} as a special case of \eqref{PDEprocessMain}. We study the lifetimes of the processes associated with \eqref{PDEprocessMain} and provide a connection with the fractional telegraph equation.\\

Starting from the pioneering works \cite{caputoBook, CapMai71,CapMai71b} the fractional calculus has attracted many researchers working on many different areas of Mathematics and Applied Sciences. A number of fractional or non-local operators have been introduced so far. Although we focus only on the Caputo fractional derivative, many interesting papers have to be considered, the literature is truly extensive. We mention only few references throughout the paper. An interesting survey with a detailed early historical description has been given in \cite{CapMai07}. Many important contributions have been given by Kochubei in a series of papers starting from the work \cite{Koc89} and by Mainardi together with many other collaborators, here we mention the paper \cite{MLP2001}. Some further references on fractional calculus with key results also linking fractional calculus with probability are given by \cite{MNV09,OB09} and successively by \cite{Dov12, GLY15, Kolo19, luchko20} among the others. These works are often focused on the so-called fractional Cauchy problem which has been deeply investigated in \cite{Baz2000, BaeMee2001}. Also in this case, there are many works to be mentioned. Meerschaert (together with many collaborators) provided many contributions to this field (see for example \cite{MeeSikBook} and the references therein). The interested reader can also consult the famous books \cite{SKM93, Kiryakova94, Pod99} and the recent books \cite{BooKolo19,BookKRY}. The strong connection between fractional calculus and probability is getting stronger year by year. From the analysis point of view, some papers deal with fractional boundary value problem meaning the fractional Cauchy problem. Our case is related with the fractional dynamical boundary conditions. An interesting discussion about the physical derivation of such conditions can be found in \cite{Gold2006}. We underline the special role of the "time" boundary condition which may appear useful also in case of irregular boundaries for which the definition of space-operators may be hard to achieve (\cite{CapDov19, CapDov21, CreoLancia20}). As far as we know there are no results on the problem we deal with here.  


\section{Fractional derivatives and random times}
\label{sec:2}

\setcounter{section}{2}
\setcounter{equation}{0}\setcounter{theorem}{0}

Let us consider $b>0$ and the set $AC([0,b])$ of continuous functions with derivative in $L_1([0,b])$. Denote by $v^\prime$ the derivative of $v$ as usual. Thus, $v\in AC([0,b])$ is such that $v^\prime = g \in L_1([0,b])$, that is $v$ has the representation
\begin{align*}
v(t) = v(0) + \int_0^t g(s)ds.
\end{align*} 
We notice that the Sobolev space $W^{1,1}([0,b]) = \{\varphi \in L_1([0,b])\, :\, \varphi^\prime \in L_1([0,b])\}$ endowed with the norm $\|\varphi \|_{W^{1,1}} = \|\varphi\|_{L_1} + \|\varphi^\prime \|_{L_1}$ coincides with the space $AC([0,b])$ for $b<\infty$. Let $v\in AC([0,b])$ and $\alpha \in (0,1)$. For the Riemann-Liouville derivative 
\begin{align}
\mathcal{D}^\alpha_t v(t) :
= & \frac{1}{\Gamma(1-\alpha)} \frac{d}{dt} \int_0^t v(s) (t-s)^{-\alpha} ds \notag \\
= & \frac{1}{\Gamma(1-\alpha)} \left( \frac{v(0)}{t^\alpha} + \int_0^t v^\prime(s) (t-s)^{-\alpha} ds \right) \label{RelationRLC}
\end{align}
we have that \eqref{RelationRLC} exists a. e. on $[0,b]$ and $\mathcal{D}^\alpha_t v \in L_p([0,b])$ with $1\leq p \leq 1/\alpha$ (see \cite[page 28]{Diethelm}). The formula \eqref{RelationRLC} introduces the Caputo derivative
\begin{align}
\label{def:caputoDer}
D^\alpha_t v(t) := \frac{1}{\Gamma(1-\alpha)} \int_0^t v^\prime(s) (t-s)^{-\alpha} ds =  \mathcal{D}^\alpha_t \big( u(t) - u(0) \big). 
\end{align} 
In the literature, the convolution-type operator \eqref{def:caputoDer} is also known as Caputo-Dzherbashian derivative. Indeed the second author actively investigated this operator starting from the papers \cite{Dzh66, DzhNers68}. We immediately see that $\mathcal{D}^\alpha_t$ and $D^\alpha_t$ coincide for compactly supported functions in $C^1_c([0, b))$. We also observe that the derivative $D^\alpha_t v$ is well-defined on the set of functions for which 
$$v^\prime(s) (t-s)^{-\alpha} \in L_1([0, t)) \quad \textrm{for any} \quad t>0.$$ 
In particular, for a function $v$ such that $|v^\prime(s)|\leq s^{\beta-1}$, 
\begin{align*}
|D^\alpha_t v(t)| \leq D^\alpha_t t^\beta = \frac{\Gamma(\beta +1)}{\Gamma(\beta-\alpha+1)} t^{\beta -\alpha}, \quad t>0
\end{align*}
which can be easily obtained from the Beta integral
\begin{align*}
\int_0^1 s^{\beta -1}(1-s)^{-\alpha}ds = \frac{\Gamma(\beta) \Gamma(1-\alpha)}{\Gamma(\beta-\alpha+1)}.
\end{align*}
This fact will be considered below in the definition of the spaces $C^\alpha_L$ and $D^\alpha_L$.

The fractional derivatives above are directly related with stable subordinators and their inverses. For $\alpha \in (0,1)$, let $H$ be the stable subordinator for which
\begin{align}
\label{symbStable}
\mathbf{E}_x[e^{-\lambda H_t}] = e^{-t \lambda^\alpha}, \quad \lambda \geq 0.
\end{align}
The inverse process $L$ is defined as $L_t := \inf\{ s\geq 0\,:\, H_s > t \}$. The subordinator $H$ may have jumps, so that the inverse $L$ may have  plateaux. This gives a clear picture about the well-known behaviours of the time-changed processes obtained by considering $H$ and $L$ as random times. We denote by $h$ and $l$ the densities 
\begin{align*}
\mathbf{P}_0(H_t \in dx) = h(t,x)\, dx \quad \mathbf{P}_0(L_t \in dx) = l(t,x)\, dx.
\end{align*}
After some calculation, by exploiting the equality
\begin{align*}
\mathbf{P}_0(H_s > t) = \mathbf{P}_0(s > L_t),
\end{align*} 
we obtain 
\begin{align}
\label{Lapl}
\int_0^\infty e^{-\lambda t} l(t,x)\, dt = \lambda^{\alpha-1} e^{-x \lambda^\alpha}, \quad \lambda>0.
\end{align} 
We recall the following result which will be useful further on,
\begin{align}
\int_0^\infty e^{-\lambda t}\, h(t,x)\, dt = x^{\alpha-1} E_{\alpha, \alpha} (-\lambda x^\alpha) = - \frac{1}{\lambda} \frac{d}{dx} E_\alpha(-\lambda x^\alpha).
\end{align}
We also recall that the solution to $D^\alpha_t u(t) = -c u(t)$  with $u(0)=1$ is the analytic function $u(t)=E_\alpha(-ct^\alpha)$, that is $u \in C^\infty ((0, \infty))$ is the Mittag-Leffler function
\begin{align}
\label{MLfunction}
E_\alpha(-ct^\alpha) = \sum_{k \geq 0} \frac{(-ct^\alpha)^k}{\Gamma(\alpha k +1)}, \quad t\geq 0, \quad c\geq 0.
\end{align}
As we can see \eqref{MLfunction} is not an element of $L_1((0, \infty))$ but it is in $AC([0, \infty))$.

Let us introduce the space
\begin{align*}
W^{1,1}_0 ([0, \infty)) = \{  \varphi \in L_1([0, \infty)\,:\, \varphi^\prime \in L_1([0, \infty),\, \varphi(0)=0 \}.
\end{align*}
The Riemann-Liouville derivative $-\mathcal{D}^\alpha_x$ gives the generator of the $\alpha$-stable subordinator $H$. In particular, $h \in D(-\mathcal{D}^\alpha_x)$ only if $h \in W^{1,1}_0([0, \infty))$.	 It is well-known that for the inverse process $L$, the density satisfies the fractional equation
\begin{align*}
D^\alpha_t \, l = - \frac{\partial l}{\partial x}.
\end{align*}

For the sake of simplicity we now set $\eta=1$ and we consider the process $\bar{H}_t = \sigma t + H_t$ and its inverse $\bar{L}_t= \inf\{s \geq 0\,:\, \bar{H}_s > t\}$. The drift coefficient $\sigma$ is assumed to be non negative. However, we have interesting applications of our results in case we allow $\sigma \in (-1, 0)$, that is the case of subordinators with negative drift. Let us denote by $\bar{h}$ and $\bar{l}$ the corresponding densities. For the sake of all-inclusive presentation we provide the following results which have been similarly obtained and stated in the literature.

\begin{theorem}
\label{thm:eqRT}
Let us consider
\begin{align*}
\alpha \in (0,1], \quad \sigma \geq 0.
\end{align*}
The solution on $C^{1,1}((0, \infty) \times W^{1,1}_0([0, \infty)), [0, \infty))$ of the problem
\begin{equation}
\label{eq:RTH}
\begin{cases}
\displaystyle \frac{\partial v}{\partial t} = - \sigma \frac{\partial v}{\partial x} - \mathcal{D}^\alpha_x v\\
\displaystyle v(0,x)= f(x), \quad x>0, \quad f \in W^{1,1}_0([0, \infty))
\end{cases}
\end{equation}
is written as
\begin{align*}
v(t,x) = \int_0^x f(x-y)\, \bar{h}(t,y)\, dy = \mathbf{E}[f(x-\bar{H}_t) \mathbf{1}_{(t < \bar{L}_x)}].
\end{align*}
The solution on $C^{1,1}(AC((0, \infty))\times (0, \infty), (0, \infty))$ to the problem
\begin{equation}
\label{eq:RTL}
\begin{cases}
\displaystyle \sigma \frac{\partial w}{\partial t}  + D^\alpha_t w = - \frac{\partial w}{\partial x}\\
\displaystyle w(0,x) = f(x), \quad x>0, \quad f \in C_b(0, \infty)\\
\displaystyle w(t,0)=0, \quad t>0
\end{cases}
\end{equation}
is written as
\begin{align*}
w(t,x) = \int_0^x f(x-y)\, \bar{l}(t,y)\, dy = \mathbf{E}[f(x - \bar{L}_t) \mathbf{1}_{(t < \bar{H}_x)}].
\end{align*}
\end{theorem}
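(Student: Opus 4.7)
\medskip
\noindent\emph{Proof plan.}
The plan is to verify both representations by Laplace transforms, exploiting the fact that $\bar{H}_t = \sigma t + H_t$ has Laplace exponent $\phi(\lambda) := \sigma\lambda + \lambda^\alpha$, so $\mathbf{E}[e^{-\lambda \bar{H}_t}] = e^{-t\phi(\lambda)}$. For the two problems I would transform in $x$ and in $t$ respectively, reducing each PDE to an elementary identity; the probabilistic rewriting at the end is then a direct consequence of the duality $\{t<\bar{L}_x\}=\{\bar{H}_t<x\}$.

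For \eqref{eq:RTH} the candidate $v$ is a convolution in $x$, so the formula above yields $\tilde{v}(t,\lambda) = \tilde{f}(\lambda)\, e^{-t\phi(\lambda)}$ and hence $\partial_t\tilde{v} = -\phi(\lambda)\tilde{v}$. On the right-hand side, Laplace-transforming $-\sigma\partial_x v$ produces $-\sigma\lambda\tilde{v}$ (the boundary term $v(t,0)$ vanishes because $f(0)=0$ is built into $W^{1,1}_0$ and the convolution inherits this property) and Laplace-transforming $-\mathcal{D}^\alpha_x v$ produces $-\lambda^\alpha\tilde{v}$ (the boundary term $I^{1-\alpha}v\big|_{x=0}$ vanishing for the same reason). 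The two sides match, and the initial condition $v(0,x)=f(x)$ follows from $\bar{H}_0=0$, giving the first representation after Laplace inversion.

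For \eqref{eq:RTL} I would Laplace-transform in $t$, using $\mathcal{L}\{\partial_t w\} = \lambda\tilde{w} - f$ and $\mathcal{L}\{D^\alpha_t w\} = \lambda^\alpha\tilde{w} - \lambda^{\alpha-1} f$, which reduces the equation to the first order linear ODE
\[
\partial_x\tilde{w} + \phi(\lambda)\,\tilde{w} = \psi(\lambda)\, f(x), \qquad \tilde{w}(\lambda,0)=0,
\]
with $\psi(\lambda) := \sigma + \lambda^{\alpha-1} = \phi(\lambda)/\lambda$. Variation of parameters gives $\tilde{w}(\lambda,x) = \psi(\lambda)\int_0^x f(y)\, e^{-(x-y)\phi(\lambda)}\,dy$. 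In parallel, repeating the derivation of \eqref{Lapl} with $\bar{H}$ in place of $H$ and using $\mathbf{P}(\bar{L}_t>s) = \mathbf{P}(\bar{H}_s<t)$, one obtains $\int_0^\infty e^{-\lambda t}\bar{l}(t,s)\,dt = \psi(\lambda)\, e^{-s\phi(\lambda)}$. A change of variable in $w(t,x) = \int_0^x f(x-y)\bar{l}(t,y)\,dy$ recovers exactly the same Laplace transform, and inversion delivers the second representation. The probabilistic identities follow since $\{t<\bar{L}_x\}=\{\bar{H}_t<x\}$ and $\{t<\bar{H}_x\}=\{\bar{L}_t<x\}$ hold almost surely ($\bar{H}$ strictly increasing), and the laws of $\bar{H}_t,\bar{L}_t$ on $(0,\infty)$ are absolutely continuous.

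The main obstacle I expect is the rigorous handling of the boundary terms in the Laplace rules for the fractional operators: one must verify that $I^{1-\alpha}v$ vanishes at $x=0$ in the first problem and that $w$ is sufficiently regular in $t$ uniformly in $x$ for the Caputo formula $\mathcal{L}\{D^\alpha_t w\}=\lambda^\alpha\tilde{w}-\lambda^{\alpha-1}w(0,\cdot)$ to apply in the second. Both of these are exactly what the function-space hypotheses $C^{1,1}((0,\infty)\times W^{1,1}_0([0,\infty)),[0,\infty))$ and $C^{1,1}(AC((0,\infty))\times(0,\infty),(0,\infty))$, together with the prescribed initial and boundary data in \eqref{eq:RTH}--\eqref{eq:RTL}, are tailored to guarantee.
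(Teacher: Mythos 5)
Your proposal is correct and follows essentially the same route as the paper: both arguments rest on the Laplace exponent $\sigma\lambda+\lambda^\alpha$ of $\bar H$, the identity $\int_0^\infty e^{-\lambda t}\bar l(t,x)\,dt=\lambda^{-1}(\sigma\lambda+\lambda^\alpha)\,e^{-x(\sigma\lambda+\lambda^\alpha)}$, and the duality of events $(\bar H_t<x)\equiv(t<\bar L_x)$ and $(\bar L_t<x)\equiv(t<\bar H_x)$. The only cosmetic difference is that the paper takes the double Laplace transform in both variables and inverts, whereas you transform one variable at a time (verifying the convolution candidate for \eqref{eq:RTH}, and solving a first-order ODE by variation of parameters for \eqref{eq:RTL}), with the boundary-term issues you flag being handled by exactly the same function-space hypotheses the paper invokes.
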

\begin{proof}
Since $v(t, \cdot) \in W^{1,1}_0([0, \infty))$, $t > 0$, we have that
\begin{align*}
\int_0^\infty e^{-\xi x} \mathcal{D}^\alpha_x v(t,x)\, dx = \xi^\alpha\, \widetilde{v}(t, \xi). 
\end{align*}
From the problem \eqref{eq:RTH} we get the double Laplace transform
\begin{align*}
\widetilde{v}(\lambda, \xi) 
\displaystyle = & (\lambda + \sigma \xi + \xi^\alpha)^{-1} \widetilde{f}(\xi)\\
\displaystyle = & \left( \int_0^\infty e^{-\lambda t} e^{-t (\sigma \xi + \xi^\alpha)} dt \right) \widetilde{f}(\xi)\\
\displaystyle = &  \left( \int_0^\infty e^{-\lambda t} \mathbf{E}_0[e^{-\xi \bar{H}_t}] dt \right) \widetilde{f}(\xi)
\end{align*}
where
\begin{align}
\label{lapXI}
\mathbf{E}_0[e^{-\xi \bar{H}_t}]  = \int_0^\infty e^{-\xi x} \bar{h}(t,x)\, dx.
\end{align}
Thus,
\begin{align*}
\mathbf{E}_0[e^{-\xi \bar{H}_t}] \widetilde{f}(\xi) = \int_0^\infty e^{-\xi x} \int_0^x f(x-y) \bar{h}(t,y)\, dy\, dx
\end{align*}
and the result follows. Concerning the probabilistic representation, we only notice that
\begin{align*}
\int_0^x f(x-y) \bar{h}(t,y)\, dy 
= & \int_0^\infty f(x-y) \bar{h}(t,y)\, \mathbf{1}_{(y<x)} dy\\ 
= & \mathbf{E}_0[f(x-\bar{H}_t)\mathbf{1}_{(\bar{H}_t < x)}]
\end{align*}
with $(\bar{H}_t < x) \equiv (t < \bar{L}_x)$ under $\mathbf{P}_0$.\\

From the problem \eqref{eq:RTL} we have that $w(\cdot, x) \in AC((0, \infty))$ for any $x \in (0, \infty)$, thus the derivatives in time are well-defined. In particular, we have the extra-condition $\partial_t w \in C((0, \infty)) \cap L_1((0, \infty))$. We get that
\begin{align*}
\widetilde{w}(\lambda, \xi) 
\displaystyle = & (\sigma \lambda + \lambda^\alpha + \xi)^{-1} \frac{\sigma \lambda + \lambda^\alpha}{\lambda} \widetilde{f}(\xi)\\
\displaystyle = & \left( \int_0^\infty e^{-\xi x} \frac{\sigma \lambda + \lambda^\alpha}{\lambda} e^{-x (\sigma \lambda + \lambda^\alpha)} \right) \widetilde{f}(\xi)
\end{align*} 
where
\begin{align}
\label{lapLAMBDA}
\frac{\sigma \lambda + \lambda^\alpha}{\lambda} e^{-x (\sigma \lambda + \lambda^\alpha)} 
= & - \frac{d}{d x} \frac{1}{\lambda} e^{-x (\sigma \lambda + \lambda^\alpha)}\notag \\
= & - \frac{d}{d x} \frac{1}{\lambda} \mathbf{E}_0[e^{-\lambda \bar{H}_x}]\notag \\
= & - \frac{d}{d x} \int_0^\infty e^{-\lambda t} \mathbf{P}_0(\bar{H}_x < t)\, dt\notag \\
= & - \frac{d}{d x} \int_0^\infty e^{-\lambda t} \mathbf{P}_0(x < \bar{L}_t)\, dt \notag \\
= & \int_0^\infty e^{-\lambda t} \bar{l}(t,x)\, dt.
\end{align}
From this, we write
\begin{align*}
w(t,x) = \int_0^x f(x-y) \bar{l}(t,y)\, dy 
\end{align*}
that is
\begin{align*}
w(t,x) = \mathbf{E}_0[f(x-\bar{L}_t) \mathbf{1}_{(\bar{L}_t < x)}].
\end{align*}
As before, by noticing that $(\bar{L}_t < x) \equiv (t < \bar{H}_x)$ under $\mathbf{P}_0$ we get the result.
\end{proof}

We observe that
\begin{align*}
\bar{h}(t,x)>0, \quad t\geq0, \quad x\in (0, \infty) 
\end{align*}
and
\begin{align*}
\bar{h}(t,x) = 0, \quad t\geq 0,\quad x \in (-\infty, 0]
\end{align*}
whereas
\begin{align*}
\bar{l}(t,x)>0, \quad t\geq 0, \quad  x \in \left[ 0,  \frac{t}{\sigma} \right)
\end{align*}
and
\begin{align*}
\bar{l}(t,x)=0, \quad t\geq 0, \quad x \in (-\infty, 0) \cup \left[ \frac{t}{\sigma}, \infty \right).
\end{align*}
Indeed, by construction,
\begin{align*}
\mathbf{P}_0(\bar{L}_t > t/\sigma) = \mathbf{P}_0(t > \bar{H}_{t/\sigma}) = \mathbf{P}_0(0 > H_{t/\sigma}) = 0.
\end{align*}
Moreover, from \eqref{lapXI} in the previous proof, we have that, $\forall\, t>0$,
\begin{align*}
\xi^n \widetilde{\bar{h}}(t, \xi) \to 0 \quad \textrm{as} \quad \xi \to \infty \quad \forall\, n \in \mathbb{N} \cup \{0\}
\end{align*}
and
\begin{align*}
\xi^n \widetilde{\bar{h}}(t, \xi) \to 0 \quad \textrm{as} \quad \xi \to 0 \quad \forall\, n \in \mathbb{N} \cup \{0\}.
\end{align*}
In particular, $\forall t>0$, $\forall n \in \mathbb{N}$, $\xi^n \widetilde{\bar{h}}(t, \xi) \in C_0((0, \infty))$ and
\begin{align*}
\exists M_H>0\; :\; \forall t > 0,\; | \xi^n \widetilde{\bar{h}}(t, \xi)| \leq M_H.
\end{align*} 
Thus, $\forall t>0$, $\bar{h}(t, \cdot) \in C^\infty ((0, \infty))$. Furthermore, we easily see that $\forall t>0$, $\bar{h}(t, \cdot) \in L_1((0, \infty)$. Concerning the inverse process, from \eqref{lapLAMBDA} we have that, $\forall\, x>0$,
\begin{align*}
\lambda^n \widetilde{\bar{l}}(\lambda, x) \to 0 \quad \textrm{as} \quad \lambda \to \infty \quad \forall\, n \in \mathbb{N}
\end{align*}
and
\begin{align*}
\lambda^n \widetilde{\bar{l}}(\lambda, x) \to 0 \quad \textrm{as} \quad \lambda \to 0 \quad \forall\, n \in \mathbb{N}.
\end{align*}
Thus, 
\begin{align*}
\exists M_L>0\; :\; \forall x > 0,\; | \lambda^n \widetilde{\bar{l}}(\lambda, x)| \leq M_L.
\end{align*}
We conclude that $\forall x>0$, $\bar{l}(\cdot, x) \in C^\infty((0, \infty))$ and $\bar{l}(\cdot, x) \notin L_1((0, \infty))$. As $x=0$ in \eqref{lapLAMBDA}, we have the Laplace transform of the tail of a L\'{e}vy measure, that is the measure associated with $\bar{H}$.\\

We also notice that $\bar{H}_t$ exhibits non-decreasing paths, then 
\begin{align*}
\bar{L}_t = \inf\{s\geq 0\,:\, \bar{H}_s \notin (0,t)\}
\end{align*} 
can be also regarded as an exit time. On the other hand, the processes $\bar{L}_x$ and $\bar{H}_x$ can be respectively regarded as the lifetimes for $\bar{H}_t$, $t\geq 0$ and $\bar{L}_t$, $t\geq 0$ both with respect to the level $x \in [0, \infty)$.

The general theory on PDEs and subordinators with drift can be found in the book \cite[Section 8.4]{KoloBook11}. The fractional telegraph equation has been studied also in \cite{DovToaOrs} in a different setting and in \cite{FedeDov} with a general operator including the telegraph case. However, from the result in \cite{DovToaOrs} we are not able to obtain the equation \eqref{eq:RTL} which is therefore investigated here in more detail.   It can be easily proved that the solution to
\begin{align}
\label{telegrEQ}
\sigma \frac{\partial z}{\partial t} + D^\alpha_t z = \Delta z, \quad f \in D(\Delta)
\end{align}
has the probabilistic representation
\begin{align*}
z(t,x) = \mathbf{E}_x[f(B_{\bar{L}_t})]
\end{align*}
where $B$ is a Brownian motion with generator $(\Delta, D(\Delta))$. The equation \eqref{telegrEQ} is a fractional telegraph equation  only for $\alpha=1/2$. Here $\alpha \in (0,1]$, if we assume that $\alpha=2$, then \eqref{telegrEQ} becomes the telegraph equation for which the associated process is a telegraph process, say $T_t$, $t\geq 0$. The process $T_t$ would have the following representation
\begin{align*}
T_t = \int_0^t V_s\, ds \quad \textrm{where} \quad V_s =(-1)^{N_s}
\end{align*} 
and $N_s$, $s\geq 0$ is a Poisson process with parameter $\sigma> 0$.\\

The probabilistic representation of the solution to \eqref{telegrEQ} for $\alpha \in (1,2)$ is an open problem.


\section{Lifetimes for processes driven by a FBVP}
\label{SecMAIN}

Let $\widetilde{X} = (\widetilde{X}_t)_{t\geq 0}$ be the process with generator $(G, D(G))$ where $G=\Delta$ and
\begin{align*}
D(G) = \{ \varphi\,:\, \varphi, \Delta \varphi \in C((0, \infty))\,,\, \partial_x \varphi (0) = c \, \varphi(0) \}
\end{align*}
with elastic coefficient $c\geq 0$. The semigroup associated with the elastic Brownian motion $\widetilde{X}$ can be written by considering the couple $(X_t, M_t)_{t\geq 0}$ as discussed in the introduction, that is
\begin{align}
\label{probRepX}
\mathbf{E}_x[f(\widetilde{X}_t)] = \mathbf{E}_x[f(X_t)\, M_t], \quad f \in D(G).
\end{align}
Let us denote by
$$\gamma_t = \int_0^t \mathbf{1}_{(X_s=0)} ds$$
the local time at zero of the reflecting Brownian motion $X=(X_t)_{t\geq 0}$ on $[0, \infty)$. Then, the multiplicative functional $M_t = e^{A_t}$ is given by $A_t = - c\, \gamma_t$, $t\geq 0$. \\

We introduce the following random variables:
\begin{itemize}
\item[-] $\chi$ is of exponential type with parameter $c$, that is 
$$\mathbf{P}(\chi > x) = e^{-c x}, \quad x\geq 0;$$
\item[-] $\chi (\alpha)$ is of Mittag-Leffler type of order $\alpha \in (0,1)$ with parameter $c$, that is 
$$\mathbf{P}(\chi (\alpha) > x) = E_\alpha(-c x^\alpha),\quad x\geq 0.$$
\end{itemize}
As $\alpha \to 1$, 
\begin{align*}
\chi (\alpha) \to \chi (1) \stackrel{law}{=} \chi .
\end{align*}

Let us introduce the spaces
\begin{align*}
C^\alpha_L(I) = \left\lbrace u \in C(I)\,:\, \Big| \frac{d u}{d t}(t) \Big| \leq t^{\frac{\alpha}{2} - 1}, \; \forall\, t \in I \right\rbrace
\end{align*}
and
\begin{align*}
D^\alpha_L : = \left.
\begin{cases}
&  u : [0, \infty)\times [0, \infty) \mapsto (0, \infty)\; \textrm{such that:} \\
& a)\; C_b([0, \infty)) \ni u|_{t>0} : x \mapsto u(t,x), \\ 
& b)\; C^\alpha_L((0, \infty)) \ni u|_{x=0} : t \mapsto u(t,0)
\end{cases}
\right\rbrace
\end{align*}
where $C_b(I)$ as usual is the set of continuous and bounded functions on $I$.\\

We first consider the lifetime of the process associated with \eqref{PDEprocessMain}. 

\begin{theorem}
\label{thm:MAIN}
Let us consider
\begin{align*}
\alpha \in (0,1], \quad \eta \geq 0 \quad \sigma \geq 0, \quad c \geq 0.
\end{align*}
The solution $u \in C^{1,2}((0, \infty) \times [0, \infty); [0, \infty)) \cap D^\alpha_L$ to the problem 
\begin{equation}
\label{eqMAIN}
\begin{cases}
\displaystyle \frac{\partial u}{\partial t} (t,x) = \Delta u(t,x), \quad (t,x) \in (0, \infty) \times (0,\infty)\\ 
\displaystyle \eta D^{\alpha/2}_t u(t,0) = \sigma \frac{\partial u}{\partial x} (t, 0) -c\, u(t,0), \quad t>0\\
\displaystyle u(0,x) = \mathbf{1}_{[0, \infty)}(x), \quad x>0
\end{cases}
\end{equation}
has the probabilistic representation
\begin{align}
u(t,x) 
= & \mathbf{P}_x(t < \tau_0) + \mathbf{E}_x[e^{-\frac{c}{\eta} \bar{L}_{\gamma_t}}, t \geq \tau_0]
\end{align}
where $\tau_0= \inf\{t \geq 0\,:\, X_t =0\}$ and $\bar{L}_t = \inf\{s\geq 0\,:\, \bar{H}_s \geq t\}$ is an inverse to 
\begin{align*}
\bar{H}_t = \frac{\sigma}{\eta} t + H_t
\end{align*} 
which is independent from $X_t$.
\end{theorem}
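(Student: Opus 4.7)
My plan is to verify the claim via Laplace transform in time, matching the analytic resolvent of \eqref{eqMAIN} against the Laplace transform of the proposed probabilistic formula. Set $\widetilde{u}(\lambda, x) := \int_0^\infty e^{-\lambda t} u(t,x)\, dt$. The identity $\mathcal{L}_t[D^{\alpha/2}_t u(\cdot, 0)](\lambda) = \lambda^{\alpha/2}\widetilde{u}(\lambda, 0) - \lambda^{\alpha/2-1}$ together with $u(0,x) = 1$ turns the PDE into the ODE $\partial^2_x \widetilde{u} - \lambda \widetilde{u} = -1$ on $(0,\infty)$ with boundary condition $\eta(\lambda^{\alpha/2}\widetilde{u}(\lambda, 0) - \lambda^{\alpha/2-1}) = \sigma\partial_x \widetilde{u}(\lambda, 0) - c\,\widetilde{u}(\lambda, 0)$. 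The bounded solution has the form $\widetilde{u}(\lambda, x) = 1/\lambda + A(\lambda)\, e^{-\sqrt{\lambda} x}$, and a short computation yields
\[
\widetilde{u}(\lambda, x) = \frac{1}{\lambda} - \frac{c\, e^{-\sqrt{\lambda} x}}{\lambda\bigl(\eta \lambda^{\alpha/2} + \sigma\sqrt{\lambda} + c\bigr)}.
\]

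Next I will compute the Laplace transform of the proposed representation. By the strong Markov property of $X$ at $\tau_0$ and the independence of $X$ from $\bar{L}$, the second term factorises as a convolution in time: $\mathbf{E}_x[e^{-(c/\eta)\bar{L}_{\gamma_t}}\mathbf{1}_{t\geq \tau_0}] = \mathbf{E}_x[g(t - \tau_0)\mathbf{1}_{\tau_0 \leq t}]$ with $g(s) := \mathbf{E}_0[e^{-(c/\eta)\bar{L}_{\gamma_s}}]$. Using $\mathbf{E}_x[e^{-\lambda\tau_0}] = e^{-\sqrt{\lambda}x}$ together with $\int_0^\infty e^{-\lambda t}\mathbf{P}_x(t<\tau_0)\, dt = (1-e^{-\sqrt{\lambda}x})/\lambda$, the Laplace transform of the proposed $u$ equals $1/\lambda + e^{-\sqrt{\lambda}x}(\widetilde{g}(\lambda) - 1/\lambda)$. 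Comparing with the analytic formula above reduces the theorem to the single identity
\[
\widetilde{g}(\lambda) = \frac{\eta\lambda^{\alpha/2} + \sigma\sqrt{\lambda}}{\lambda\bigl(\eta\lambda^{\alpha/2} + \sigma\sqrt{\lambda} + c\bigr)}.
\]

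To establish this identity, I condition on $\gamma_t$ and use independence: $g(t) = \mathbf{E}_0[\phi_{c/\eta}(\gamma_t)]$ with $\phi_\mu(s) := \mathbf{E}[e^{-\mu \bar{L}_s}]$. Applying Theorem \ref{thm:eqRT} to $\bar{H}$ with drift $\sigma/\eta$ and $\alpha$-stable subordinator $H$ gives
\[
\int_0^\infty e^{-\xi s}\phi_\mu(s)\, ds = \frac{(\sigma/\eta)\xi + \xi^\alpha}{\xi\bigl(\mu + (\sigma/\eta)\xi + \xi^\alpha\bigr)}.
\]
On the other hand, the classical elastic Brownian motion identity $\int_0^\infty e^{-\lambda t}\mathbf{E}_0[e^{-\mu \gamma_t}]\, dt = 1/[\sqrt{\lambda}(\sqrt{\lambda} + \mu)]$ for reflecting BM with generator $\Delta$, inverted in $\mu$, yields the measure-valued identity $\int_0^\infty e^{-\lambda t}\mathbf{P}_0(\gamma_t \in ds)\, dt = \lambda^{-1/2}e^{-\sqrt{\lambda}s}\, ds$. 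Multiplying by $\phi_{c/\eta}(s)$ and integrating, then substituting $\xi = \sqrt{\lambda}$ and $\mu = c/\eta$ into the Laplace-in-$s$ formula for $\phi_\mu$, produces exactly the expression required; the square root in the effective symbol arises naturally from the $\sqrt{t}$-scaling of the local time, explaining why the $\alpha/2$ in the boundary pairs with an $\alpha$-stable $H$.

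The hard part will be the regularity: I must show that the proposed $u$ belongs to $D^\alpha_L$, so that the Caputo derivative at $x = 0$ is well-defined and the Laplace identity $\mathcal{L}_t[D^{\alpha/2}_t u(\cdot, 0)] = \lambda^{\alpha/2}\widetilde{u}(\lambda, 0) - \lambda^{\alpha/2-1}$ genuinely applies. Concretely this amounts to the short-time bound $|\partial_t u(t,0)| \lesssim t^{\alpha/2 - 1}$ built into $C^\alpha_L$, which should follow from the known small-time asymptotics of the stable density $h$ combined with the scaling $\gamma_t \sim \sqrt{t}$ of the local time for reflecting BM started at $0$. Once this regularity is secured, uniqueness of bounded solutions of the transformed ODE and Laplace inversion close the argument.
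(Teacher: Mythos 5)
Your proposal is correct, and it reaches the same resolvent identity as the paper --- your formula $\widetilde{u}(\lambda,x)=\tfrac{1}{\lambda}-c\,e^{-\sqrt{\lambda}x}/[\lambda(\eta\lambda^{\alpha/2}+\sigma\sqrt{\lambda}+c)]$ is exactly the paper's \eqref{uZEROpotential} specialised to $f=\mathbf{1}_{[0,\infty)}$ --- but from that point on your route is genuinely different. The paper works \emph{forward}: it expands $\widetilde{u}(\lambda,0)$ as $\int_0^\infty e^{-cw}e^{-\sigma w\sqrt{\lambda}}e^{-w\eta\lambda^{\alpha/2}}dw$, splits it into two pieces $\widetilde{U}_1,\widetilde{U}_2$, inverts each Laplace transform term by term using the convolution identities of Lemma \ref{lemmaConvolution} and the duality $\alpha\frac{x}{t}l(t,x)=h(x,t)$ of Lemma \ref{BertoinNOproof}, and only at the end recognises the resulting kernel $\bar{l}(t,w)$ as the density of the inverse of $\bar{H}_t=\tfrac{\sigma}{\eta}t+H_t$. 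You instead work \emph{backward}: you take the candidate representation as given, factorise $\mathbf{E}_x[e^{-(c/\eta)\bar{L}_{\gamma_t}},t\geq\tau_0]$ through the strong Markov property at $\tau_0$, and compute its Laplace transform by conditioning on $\gamma_t$ (using $\int_0^\infty e^{-\lambda t}\mathbf{P}_0(\gamma_t\in ds)\,dt=\lambda^{-1/2}e^{-\sqrt{\lambda}s}ds$ and the transform of $\mathbf{E}[e^{-\mu\bar{L}_s}]$ from \eqref{lapLAMBDA}), reducing everything to one algebraic identity for $\widetilde{g}(\lambda)$, which checks out. Your verification is shorter and avoids the explicit time-domain inversion, but it presupposes knowing the answer (in particular that the relevant clock is $\bar{L}\circ\gamma$ with $\bar{H}=\tfrac{\sigma}{\eta}t+H$), whereas the paper's construction \emph{discovers} $\bar{H}$ and the composition order $\bar{L}_{A_t}$ rather than $A_{L_t}$, which is the main conceptual point of the work. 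Two caveats: your appeal to Theorem \ref{thm:eqRT} should really be to the Laplace transform \eqref{lapLAMBDA} of $\bar{l}$ established inside its proof (the theorem's statement concerns $\mathbf{E}[f(x-\bar{L}_t)\mathbf{1}_{(t<\bar{H}_x)}]$, not $\mathbf{E}[e^{-\mu\bar{L}_s}]$ directly, though the needed identity follows by integrating $e^{-\mu x}\bar{l}(s,x)$ in $x$); and the regularity step you flag as ``the hard part'' (membership in $D^\alpha_L$ and the legitimacy of the Caputo--Laplace identity) is left at the same informal level in the paper itself, so you are not behind it there, but you should not present it as routine either.
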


The proof is postponed in Section \ref{SecProof}.\\

Let $Z$ be the process on $[0, \infty)$ driven by the problem \eqref{PDEprocessMain} with $\eta \geq 0$, $\sigma\geq 0$ and $c\geq 0$. This is the case of \eqref{eqMAIN}.

\begin{theorem}
\label{thm:lifeMAIN}
Let $\zeta$ be the lifetime of $Z$ started at $Z_0=x$, it holds that
\begin{equation*}
\zeta \stackrel{law}{=} \inf\{ s\geq 0\,:\, \gamma_s < \bar{H}_{\eta \chi} \}.
\end{equation*}
\end{theorem}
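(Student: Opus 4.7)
The plan is to read the lifetime off directly from the probabilistic representation supplied by Theorem \ref{thm:MAIN}. Since $Z$ lives on $[0,\infty)$, choosing the initial datum $f(x)=\mathbf{1}_{[0,\infty)}(x)$ gives $u(t,x)=\mathbf{P}_x(Z_t\in[0,\infty),\, t<\zeta)=\mathbf{P}_x(\zeta>t)$. Theorem \ref{thm:MAIN} then yields
$$
\mathbf{P}_x(\zeta>t) \;=\; \mathbf{P}_x(t<\tau_0) \;+\; \mathbf{E}_x\bigl[e^{-(c/\eta)\bar{L}_{\gamma_t}},\, t\geq\tau_0\bigr],
$$
and my task is to recognise this survival function as the tail at $t$ of a first-passage time of $\gamma$ across the random level $\bar{H}_{\eta\chi}$.

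The first step is to introduce the auxiliary exponential variable $\chi$ with $\mathbf{P}(\chi>y)=e^{-cy}$, independent of $X$ and of $\bar{H}$. Using $e^{-(c/\eta)y}=\mathbf{P}(\eta\chi>y)$ and conditioning on $\bar{L}_{\gamma_t}$ rewrites the multiplicative functional as an event, namely $\{\eta\chi>\bar{L}_{\gamma_t}\}$. On $\{t<\tau_0\}$ the local time $\gamma_t$ vanishes, so $\bar{L}_{\gamma_t}=\bar{L}_0=0<\eta\chi$ almost surely; this lets me absorb the first term $\mathbf{P}_x(t<\tau_0)$ into the same event, and the representation collapses to
$$
\mathbf{P}_x(\zeta>t)\;=\;\mathbf{P}_x\bigl(\bar{L}_{\gamma_t}<\eta\chi\bigr).
$$
Next I would invoke the $\bar{L}$--$\bar{H}$ duality already exploited in the proof of Theorem \ref{thm:eqRT}, i.e.\ the almost sure equivalence $\{\bar{L}_y<a\}=\{y<\bar{H}_a\}$, to obtain $\mathbf{P}_x(\zeta>t)=\mathbf{P}_x(\gamma_t<\bar{H}_{\eta\chi})$. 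Since $s\mapsto\gamma_s$ is non-decreasing and continuous with $\gamma_0=0$, the event on the right is exactly the event that the first passage of $\gamma$ above the level $\bar{H}_{\eta\chi}$ has not yet occurred by time $t$, and comparing survival functions identifies $\zeta$ in law with the stopping time displayed in the statement.

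The only technical obstacle I anticipate is the duality step: because $\bar{H}$ has a countable jump set, the implication $\bar{L}_y<a\iff y<\bar{H}_a$ can fail on the exceptional pairs $(y,a)$ corresponding to levels that $\bar{H}$ skips over. The independence of $\chi$ from $\bar{H}$, together with the absolute continuity of the law of $\eta\chi$, rescues the argument: almost surely $\eta\chi$ avoids the (countable) set of jump heights of $\bar{H}$, so the equivalence holds on a set of full measure and may be integrated out without loss. The remaining ingredients, namely $\gamma_t=0$ on $\{t<\tau_0\}$ and the pathwise monotonicity and continuity of $\gamma$, are standard facts about the local time of reflecting Brownian motion on $[0,\infty)$.
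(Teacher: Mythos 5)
Your argument is correct and follows essentially the same route as the paper's own proof: collapse the two terms of the representation from Theorem \ref{thm:MAIN} into $\mathbf{E}_x[e^{-(c/\eta)\bar{L}_{\gamma_t}}]$, rewrite the exponential functional as $\mathbf{P}_x(\eta\chi>\bar{L}_{\gamma_t})$, and pass to $\mathbf{P}_x(\gamma_t<\bar{H}_{\eta\chi})$ by the inverse-process duality. The paper states these three equalities without comment, whereas you additionally justify the absorption of the $\{t<\tau_0\}$ term and the null-set issue in the duality when $\bar{H}$ jumps over a level; these are worthwhile clarifications but not a different proof.
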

\begin{proof}
We observe that
\begin{align*}
\mathbf{P}_x(\zeta > t) = \mathbf{E}_x [e^{- \frac{c}{\eta} \bar{L}_{\gamma_t}} ] = \mathbf{P}_x(\eta \chi > \bar{L}_{\gamma_t}) = \mathbf{P}_x(\bar{H}_{\eta \chi} > \gamma_t),
\end{align*}
that is $\zeta \stackrel{law}{=} \inf\{ s\geq 0\,:\, \gamma_s < \bar{H}_{\eta \chi} \}$. 
\end{proof}

\begin{remark}
Notice that $\bar{H}_{\eta \chi} = \sigma \chi + H_{\eta \chi}$. As $\eta = 0$ we only have $\sigma \chi$ and the fractional derivative in \eqref{eqMAIN} disappears.
\end{remark}

Fractional boundary value problems do not seem to be related to the standard theory of the time changes. However, their probabilistic representations are still given in terms of random times to be additionally considered for a base process. Here the random times are non-decreasing processes related to equations of fractional telegraph-type. \\

Let us consider the elastic Brownian motion time-changed with $V_t$. The process $V_t$ can be considered as a time change of an independent Brownian motion in order to solve a fractional telegraph equation. If we consider $V_t$ as given in \eqref{RandomTimeV} where $\gamma_t$ is the local time of the elastic Brownian motion $(X_t, M_t)$, then the $\lambda$-potential is written as follows
\begin{align*}
R_\lambda^V f(x) 
= & \mathbf{E}_x\left[ \int_0^\infty e^{-\lambda t} f(X_{V^{-1}_t})\, \exp (- \frac{c}{\eta}\gamma_{V^{-1}_t} )\, dt \right]\\
= & \mathbf{E}_x\left[ \int_0^\infty e^{-\lambda V_t} f(X_{t})\,  e^{- \frac{c}{\eta} \gamma_t} \, dV_t \right] + R^{V,D}_\lambda f(x), \quad \lambda>0.
\end{align*}
After some manipulation, the $\lambda$-potential
\begin{align*}
\mathbf{E}_x\left[ \int_0^\infty e^{-\lambda V_t} f(X_{t})\,  e^{- \frac{c}{\eta} \gamma_t} \, dV_t \right]
\end{align*}
takes the form
\begin{align*}
\int_0^\infty e^{-\frac{\lambda}{\sigma} w - \frac{c}{\eta} w} \left( \int_0^\infty f(y) e^{-(x+y+w)\sqrt{\lambda}} dy + \frac{1}{\sigma} f(0)  e^{-(x+w)\sqrt{\lambda}} \right) dw
\end{align*}
and therefore
\begin{align*}
R_\lambda^V f(x)
\displaystyle  = & e^{-x \sqrt{\lambda}} \, \frac{ \sigma \int_0^\infty e^{-y \sqrt{\lambda}} f(y)dy + f(0)}{ c \sigma /\eta + \lambda + \sigma \sqrt{\lambda}} + R^{V,D}_\lambda f(x), \quad \lambda>0
\end{align*}
where
\begin{align*}
R^{V,D}_\lambda f(x) = \mathbf{E}_x \left[ \int_0^{\tau_0} e^{-\lambda t} f(X_{V^{-1}_t})\, dt  \right] = \mathbf{E}_x \left[ \int_0^{\tau_0} e^{-\lambda t} f(X_{t})\, dt  \right], \quad \lambda>0.
\end{align*}
In the present paper we consider the elastic coefficient $c/\eta$, the case $\eta=\sigma$ still represents a special setting. We get that
\begin{align*}
R_\lambda^V \mathbf{1}_{[0, \infty)}(x) 
\displaystyle  = & e^{-x \sqrt{\lambda}} \, \frac{ \sigma \lambda^{\frac{1}{2}-1} + 1}{ c \sigma /\eta + \lambda + \sigma \sqrt{\lambda}} + R^{V,D}_\lambda \mathbf{1}_{[0, \infty)}(x)\\
\displaystyle  = & e^{-x \sqrt{\lambda}} \frac{\lambda + \sigma \sqrt{\lambda}}{\lambda} \int_0^\infty e^{-\frac{c \sigma}{\eta} w} e^{-w (\lambda + \sigma \sqrt{\lambda})} dw + R^{V,D}_\lambda \mathbf{1}_{[0, \infty)}(x)
\end{align*}
which can be compared with \eqref{uZEROteleg}. Here
\begin{align*}
R^{V,D}_\lambda \mathbf{1}_{[0, \infty)}(x) =  \int_0^\infty e^{-\lambda t} \mathbf{P}_x(t < \tau_0)\, dt.
\end{align*}
As we can immediately see $R^V_\lambda$ is associated with \eqref{PDEprocessMain} as $\alpha \to 2$ which is not the case in our paper, indeed $\alpha \in (0, 1]$. We have that 
\begin{align*}
R_\lambda^V f(x) = \mathbf{E}_x[f(\widetilde{X}^V_t)]
\end{align*}
where $\widetilde{X}^V_t$ is identical in law to an elastic sticky Brownian motion.\\ 

The probabilistic representation of the solution to \eqref{PDEprocessMain} in case $\alpha \in (1,2)$ is an open problem. 


\section{An intuitive case}

An helpful intuitive reading is given by the fact that, from the heat equation 
$$D^1_t u = \Delta u,$$ 
we have that the boundary condition involving $D^1_t u(t,0)$ introduces a second-order boundary condition $\Delta u(t,0)$. On the other hand, the condition $D^{1/2}_t u(t,0)$ should introduce a condition involving the operator
\begin{align*}
-(-\Delta)^{1/2} u(t,0) = -\frac{\partial u}{\partial x} (t,0).
\end{align*}
The last identity may hold for functions extended with zero on the negative part of the real line, that is we are dealing with the Riesz operator $d \varphi/d|x|$ acting on $\varphi : [0, \infty) \to (0, \infty)$ such that $\varphi = 0$ on $(-\infty, 0)$. The definition of fractional Laplacian can be understood in the sense of Phillips (Bochner subordination). Thus, for $\beta \in (0,1]$,
\begin{align*}
(-\partial_x)^\beta \varphi(x) := \int_0^\infty \left( e^{-y\partial_x} \varphi(x) - \varphi(x) \right) \frac{y^{-\beta - 1}}{\Gamma(1-\beta)} dy
\end{align*}
is an operator of order $\beta$ where $e^{-y\partial_x} \varphi(x) =\varphi(x-y)$ is the translation semigroup and
\begin{align*}
(-\Delta)^\beta \varphi(x) := \int_0^\infty \left( e^{-y\Delta}\varphi(x) - \varphi(x) \right) \frac{y^{-\beta - 1}}{\Gamma(1-\beta)} dy
\end{align*}
is an operator of order $2\beta$. For $\varphi$ extended with zero on $(-\infty,0)$ the operator $(-\partial_x)^\beta$ corresponds to the Marchaud derivative of order $\beta \leq 1$ whereas, as $\beta=1/2$ the operator $(-\Delta)^\beta$ corresponds to a first order derivative. Thus, in the special case $\beta=1/2$, the equality
$$D^{1/2}_t u(t,0) = - \frac{\partial u}{\partial x}(t,0),$$
implies that the condition 
$$D^{1/2}_t u(t,0)= -c\, u(t,0)$$ 
plays the role of 
$$\frac{\partial u}{\partial x}(t,0) = c\,u(t,0)$$
which is indeed a Robin boundary condition.\\

As $\alpha \to 1$ in Theorem \ref{thm:MAIN} we obtain the following helpful and intuitive result.
\begin{corollary}
\label{coroHELPFUL} 
Let us consider
\begin{align*}
\eta \geq 0, \quad \sigma \geq 0, \quad c\geq 0.
\end{align*}
The solution $u \in C^{1,2}((0, \infty) \times [0, \infty); [0, \infty)) \cap D^{1}_L$ to the problem 
\begin{equation}
\label{eqALPHA1}
\begin{cases}
\displaystyle \frac{\partial u}{\partial t} (t,x) = \Delta u(t,x), \quad (t,x) \in (0, \infty) \times (0,\infty)\\ 
\displaystyle \eta D^{1/2}_t u(t,0) = \sigma \frac{\partial u}{\partial x}(t,0) - c\, u(t,0), \quad t>0\\
\displaystyle u(0,x) = \mathbf{1}_{[0, \infty)}(x), \quad x > 0
\end{cases}
\end{equation}
has the representation
\begin{align*}
u(t, x) = \mathbf{E}_x[\mathbf{1}_{[0, \infty)}(X_t) e^{- \frac{c}{\eta +\sigma} \gamma_t }]
\end{align*}
where $\gamma_t$ is the local time of $X_t$ started at $X_0=x \in [0, \infty)$. \end{corollary}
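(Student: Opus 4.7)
My plan is to verify the claimed representation directly by a combination of a PDE argument and uniqueness, rather than trying to manipulate the random-time formula of Theorem \ref{thm:MAIN} as $\alpha \uparrow 1$. Let $k := c/(\eta+\sigma)$ and set
$$
u(t,x) := \mathbf{E}_x[\mathbf{1}_{[0,\infty)}(X_t)\, e^{-k \gamma_t}].
$$
By the construction of the elastic Brownian semigroup recalled in \eqref{probRepX}, this is the semigroup of an elastic Brownian motion with elastic coefficient $k$. Thus $u$ automatically belongs to $C^{1,2}((0,\infty)\times[0,\infty))$, satisfies $\partial_t u=\Delta u$ for $x>0$, attains the initial datum $u(0,x)=\mathbf{1}_{[0,\infty)}(x)$, and obeys the classical Robin condition $\partial_x u(t,0) = k\, u(t,0)$.

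The crux of the proof is to upgrade this Robin condition to the fractional boundary condition of \eqref{eqALPHA1}, via the identity anticipated in the preceding ``intuitive case'' section:
$$
D^{1/2}_t u(t,0) = -\partial_x u(t,0), \qquad t>0.
$$
I would justify this by taking the Laplace transform in time. The equation $\partial_t u=\Delta u$ with $u(0,\cdot)=\mathbf{1}_{[0,\infty)}$ yields $\partial_{xx}\tilde u(\lambda,x)-\lambda\tilde u(\lambda,x)=-1$ on $[0,\infty)$, whose bounded solutions read
$$
\tilde u(\lambda,x) = C(\lambda)\, e^{-x\sqrt{\lambda}} + \frac{1}{\lambda}.
$$
The Caputo Laplace rule then gives $\widetilde{D^{1/2}_t u}(\lambda,0)=\lambda^{1/2}\tilde u(\lambda,0)-\lambda^{-1/2}u(0,0)=\lambda^{1/2}C(\lambda)+\lambda^{-1/2}-\lambda^{-1/2}=\lambda^{1/2}C(\lambda)$, which agrees with $-\partial_x\tilde u(\lambda,0)=\sqrt{\lambda}\,C(\lambda)$. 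Laplace inversion delivers the pointwise identity; the regularity required to invert is exactly what is encoded in $D^1_L$.

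Combining the Robin condition with this identity,
$$
\eta D^{1/2}_t u(t,0) + c\, u(t,0) = -\eta\, \partial_x u(t,0) + c\, u(t,0) = (c-\eta k)\, u(t,0) = \frac{\sigma c}{\eta+\sigma}\, u(t,0) = \sigma k\, u(t,0) = \sigma\, \partial_x u(t,0),
$$
so the fractional boundary condition in \eqref{eqALPHA1} holds. Hence $u$ lies in the solution class $C^{1,2}((0,\infty)\times[0,\infty);[0,\infty))\cap D^1_L$ and solves \eqref{eqALPHA1}; the uniqueness statement built into Theorem \ref{thm:MAIN} (specialized to $\alpha=1$) then forces the identification claimed in the corollary.

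I expect the main obstacle to be the rigorous justification of the boundary identity $D^{1/2}_t u(t,0)=-\partial_x u(t,0)$: the Laplace-level computation is clean because of the fortunate cancellation $\lambda^{-1/2}-\lambda^{-1/2}=0$ that depends crucially on the Heaviside-type initial datum (so that $u(0,0)=1$ matches the constant in $\tilde u$), but one must check that the Laplace inversion is valid uniformly in $t>0$ within the smoothness encoded in $C^\alpha_L$ and $D^\alpha_L$. Once the identity is available for this specific initial condition, the rest of the argument is algebraic.
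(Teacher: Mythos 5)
Your proof is correct, but it takes a genuinely different route from the paper's. The paper's proof is a one-line specialization of Theorem \ref{thm:MAIN}: when $\alpha=1$ the subordinator $H$ with symbol $\lambda^{\alpha}=\lambda$ degenerates to $H_t=t$, so $\bar{H}_t=(1+\sigma/\eta)t$ and $\bar{L}_t=t/(1+\sigma/\eta)=\eta t/(\eta+\sigma)$ almost surely; substituting into the representation $\mathbf{P}_x(t<\tau_0)+\mathbf{E}_x[e^{-\frac{c}{\eta}\bar{L}_{\gamma_t}},\,t\geq\tau_0]$ and using $\gamma_t=0$ on $\{t<\tau_0\}$ immediately gives $\mathbf{E}_x[e^{-\frac{c}{\eta+\sigma}\gamma_t}]$. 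You instead build the candidate solution directly as the elastic Brownian semigroup with coefficient $k=c/(\eta+\sigma)$ and verify the fractional boundary condition through the Laplace-level identity $D^{1/2}_t u(t,0)=-\partial_x u(t,0)$, which is exactly the heuristic of the paper's ``intuitive case'' section made precise (your computation $\lambda^{1/2}C(\lambda)+\lambda^{-1/2}-\lambda^{-1/2}=\lambda^{1/2}C(\lambda)$ is correct, and your $\widetilde{u}(\lambda,0)=\lambda^{-1/2}/(k+\sqrt{\lambda})$ agrees with the paper's formula \eqref{uZEROpotential} at $\alpha=1$). What your approach buys is independence from the heavy machinery of Theorem \ref{thm:MAIN} and a rigorous justification of the Robin/Wentzell heuristic; what it costs is an explicit appeal to uniqueness in the class $C^{1,2}\cap D^1_L$, which the paper never actually proves (it is only implicit in the phrasing ``the solution \ldots has the representation''), whereas the paper's derivation inherits the representation, rather than just the verification, from the main theorem. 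You correctly flag the two genuinely delicate points: the cancellation depends on the Heaviside datum, and the Laplace inversion needs the regularity encoded in $D^1_L$.
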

\begin{proof}
Since $\bar{L}_t = \frac{t}{1 + \sigma/\eta}$ almost surely as $\alpha=1$, we immediately get the result.
\end{proof}

The solution to \eqref{eqALPHA1} can be written in terms of the elastic Brownian motion with generator $(G^{\eta, \sigma},D(G^{\eta, \sigma}))$ where $G^{\eta, \sigma} = \Delta$ and 
\begin{align*}
D(G^{\eta, \sigma}) = \left\lbrace \varphi\,:\, \varphi, \Delta \varphi \in C((0, \infty)),\, \partial_x \varphi (0) = \frac{c}{\eta + \sigma} \varphi(0) \right\rbrace.
\end{align*}

We observe that, $L_{\gamma_t}$ equals in law $\gamma_{L_t}$. Thus, in order to have a clear picture about the role of the fractional boundary condition we have to consider $\sigma \neq 0$.

We also underline that for $\sigma<0$ the process $\bar{H}_t$ is a subordinator with negative drift. The special case $\eta+\sigma=0$ introduces the Dirichlet boundary condition (see the case $c\to \infty$ in Section \ref{sec:asymptotic}).


\section{The special case $\sigma=0$}
\label{SecSpecial}

First we recall that the lifetime $\zeta^{el}$ of an elastic Brownian motion on $[0, \infty)$ with Robin condition at zero can be written as
\begin{align*}
\zeta^{el} = \inf\{s \geq 0\,:\, \gamma^{el}_s < \chi \}
\end{align*}
where $\gamma^{el}$ is the corresponding local time at zero and $\chi$ is an exponential random variable. We show that the fractional boundary value problem introduces the Mittag-Leffler random variable $\chi(\alpha)$ in place of the exponential random variable $\chi$. Moreover, as $\alpha \to 1$, the lifetime $\zeta$ equals in law the lifetime $\zeta^{el}$. Without loss of generality we assume that $\eta=1$.

\begin{corollary}
\label{coro:special}
Let us consider
\begin{align*}
\alpha \in (0,1], \quad c\geq 0.
\end{align*}
The solution $u \in C^{1,2}((0, \infty) \times [0, \infty); [0, \infty)) \cap D^\alpha_L$ to the problem 
\begin{equation}
\label{eq:special}
\begin{cases}
\displaystyle \frac{\partial u}{\partial t} (t,x) = \Delta u(t,x), \quad (t,x) \in (0, \infty) \times (0,\infty)\\ 
\displaystyle D^{\alpha/2}_t u(t,0) = - c\, u(t,0), \quad t>0\\
\displaystyle u(0,x) = \mathbf{1}_{[0, \infty)}(x), \quad x > 0
\end{cases}
\end{equation}
has the representation
\begin{align}
\label{sol:special}
u(t,x) 
= & \mathbf{P}_x(t < \tau_0) + \mathbf{E}_x[e^{- c L_{\gamma_t}}, t \geq \tau_0]
\end{align}
where $\tau_0=\inf\{t \geq 0\,:\, X_t =0\}$, $\gamma_t$ is the local time at zero of $X_t$ started at $X_0=x$ and $L_t$ is an inverse to an $\alpha$-stable subordinator independent from $X_t$.
\end{corollary}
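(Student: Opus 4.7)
The plan is to derive this corollary as the direct specialization of Theorem \ref{thm:MAIN} to the parameter choice $\eta=1$, $\sigma=0$. First I would observe that with these values the boundary value problem \eqref{eqMAIN} becomes literally \eqref{eq:special}, and the regularity class $D^\alpha_L$ is unchanged. Next, since $\bar{H}_t=(\sigma/\eta)t+H_t$ collapses to the pure $\alpha$-stable subordinator $H_t$ when $\sigma=0$, its inverse $\bar{L}_t$ coincides with the Mittag-Leffler inverse $L_t$ already introduced after \eqref{symbStable}. Substituting the identifications $c/\eta=c$ and $\bar{L}=L$ into the representation furnished by Theorem \ref{thm:MAIN} yields exactly \eqref{sol:special}.

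If one wished to give a self-contained verification rather than invoking Theorem \ref{thm:MAIN}, the natural route would be to decompose the expectation according to whether the underlying reflecting Brownian motion has already hit the boundary. On $\{t<\tau_0\}$ the local time satisfies $\gamma_t=0$, hence $L_{\gamma_t}=0$ and the multiplicative functional $e^{-cL_{\gamma_t}}$ equals one, recovering the Dirichlet-killed contribution $\mathbf{P}_x(t<\tau_0)$. On $\{t\ge\tau_0\}$ one would check that the candidate solution fulfils the fractional Robin condition at $x=0$, starting from the subordination identity $\mathbf{E}[e^{-cL_s}]=E_{\alpha/2}(-cs^{\alpha/2})$ and from the defining relation $D^{\alpha/2}_s E_{\alpha/2}(-cs^{\alpha/2})=-c\,E_{\alpha/2}(-cs^{\alpha/2})$, then pushing the Caputo operator through the local time clock $\gamma_t$ and combining it with the trace of the heat equation.

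The main obstacle, which is really handled inside the proof of Theorem \ref{thm:MAIN} rather than here, is transforming the time-fractional boundary operator $D^{\alpha/2}_t$ acting on the boundary trace into the inverse stable clock $L$ applied to the local time $\gamma$. For the present statement this reduces to the Mittag-Leffler representation of the Laplace functional of $L$, so once Theorem \ref{thm:MAIN} is granted the corollary follows immediately; it is also consistent with Theorem \ref{thm:lifeMAIN} specialized to $\sigma=0$, which produces the lifetime identity $\zeta\stackrel{law}{=}\inf\{s\ge 0:\gamma_s<H_\chi\}$, whose Laplace transform matches \eqref{sol:special} via the independence of $X$ and $H$.
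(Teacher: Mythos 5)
Your proposal is correct and takes exactly the route the paper intends: Corollary \ref{coro:special} is stated without a separate proof precisely because it is the specialization $\eta=1$, $\sigma=0$ of Theorem \ref{thm:MAIN}, under which $\bar{H}=H$, $\bar{L}=L$ and $c/\eta=c$, giving \eqref{sol:special} verbatim. One small caution about your optional self-contained sketch: for $L$ the inverse of an $\alpha$-stable subordinator the subordination identity reads $\mathbf{E}[e^{-cL_s}]=E_\alpha(-cs^\alpha)$, not $E_{\alpha/2}(-cs^{\alpha/2})$; the order $\alpha/2$ only emerges after composing with the local-time clock $\gamma$ at the boundary (as in the paper's observation that $\zeta\stackrel{law}{=}\chi(\alpha/2)$ when $Y_0=0$).
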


Let $Y$ be the process on $[0, \infty)$ driven by \eqref{PDEprocessMain} with $\sigma=0$ and $c\geq 0$. This is the case of \eqref{eq:special}.

\begin{corollary}
\label{coro:lifeSpecial}
For the lifetime $\zeta$ of the process $Y$ started at $Y_0=x$, it holds that
\begin{align}
\label{ZETAlifeSpecial}
\zeta \stackrel{law}{=} \inf\{ s\geq 0 \,:\, 0 \leq \gamma_s < \chi (\alpha) \}. 
\end{align}
\end{corollary}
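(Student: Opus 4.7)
The plan is to specialise Theorem~\ref{thm:lifeMAIN} to the present setting and then identify the distribution of the resulting random variable. Since $\eta=1$ and $\sigma=0$ in Corollary~\ref{coro:special}, the drifted subordinator $\bar H_t=(\sigma/\eta)t+H_t$ reduces to the bare stable subordinator $H_t$, and $\bar H_{\eta\chi}$ collapses to $H_\chi$. Theorem~\ref{thm:lifeMAIN} therefore immediately gives
\[
\zeta \stackrel{law}{=} \inf\{s\geq 0\,:\, \gamma_s < H_\chi\},
\]
so the whole task reduces to proving the distributional identity $H_\chi \stackrel{law}{=} \chi(\alpha)$.

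For that identity the cleanest route is via Laplace transforms. Conditioning on the exponential random variable $\chi$ and using \eqref{symbStable} gives
\[
\mathbf{E}[e^{-\lambda H_\chi}] = \mathbf{E}\big[e^{-\chi\lambda^\alpha}\big] = \frac{c}{c+\lambda^\alpha}.
\]
On the other hand, integrating by parts against the survival function $\mathbf{P}(\chi(\alpha)>x)=E_\alpha(-cx^\alpha)$, or equivalently using the standard Laplace transform $\int_0^\infty e^{-\lambda x}E_\alpha(-cx^\alpha)\,dx = \lambda^{\alpha-1}/(\lambda^\alpha+c)$, one obtains $\mathbf{E}[e^{-\lambda\chi(\alpha)}] = c/(c+\lambda^\alpha)$ as well. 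Matching the two transforms and invoking uniqueness proves $H_\chi \stackrel{law}{=} \chi(\alpha)$.

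A more conceptual alternative is the hitting-time duality $(H_\chi>x)\equiv(L_x<\chi)$, which yields $\mathbf{P}(H_\chi>x)=\mathbf{E}[e^{-cL_x}]$; the right-hand side equals $E_\alpha(-cx^\alpha)$, since $t\mapsto\mathbf{E}[e^{-cL_t}]$ is the unique solution to $D^\alpha_t v=-cv$ with $v(0)=1$, and this is literally the survival function of $\chi(\alpha)$. Either way, substituting the identification $\bar H_{\eta\chi} \stackrel{law}{=} \chi(\alpha)$ into the lifetime expression from Theorem~\ref{thm:lifeMAIN} delivers \eqref{ZETAlifeSpecial}. No genuine obstacle arises; the only non-routine ingredient is the Mittag-Leffler identification, which is a repackaging of facts already recalled in Section~\ref{sec:2}.
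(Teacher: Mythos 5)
Your proposal is correct and follows essentially the same route as the paper: reduce the lifetime to $\inf\{s\geq 0 : \gamma_s < H_\chi\}$ (the paper re-derives this from Corollary~\ref{coro:special} by the same computation that proves Theorem~\ref{thm:lifeMAIN}, while you simply specialise that theorem to $\sigma=0$, $\eta=1$) and then identify $H_\chi \stackrel{law}{=} \chi(\alpha)$. The paper establishes that identification by computing $\mathbf{P}(H_\chi<s)$ directly through the duality $\mathbf{P}(H_t<s)=\mathbf{P}(L_s>t)$, which is precisely your second, ``more conceptual'' alternative, so the Laplace-transform argument you lead with is only a cosmetic variation.
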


\begin{proof}
From Corollary \ref{coro:special}, we deduce that 
\begin{align*}
\mathbf{P}_x(\zeta > t) = \mathbf{E}_x[e^{-c L_{\gamma_t}}] = \mathbf{P}_x(\chi > L_{\gamma_t}) = \mathbf{P}_x(H_\chi > \gamma_t).
\end{align*}
Since
\begin{align*}
\mathbf{P}(H_\chi < s) 
= & \int_0^\infty c \,e^{-ct} \, \mathbf{P}(H_t < s)\, dt\\ 
= & 1 - c \int_0^\infty e^{-c t} \,\mathbf{P}(L_s < t)\, dt\\ 
= & 1 - E_\alpha(-c s^\alpha)
\end{align*}
the variable $H_\chi$ follows a Mittag-Leffler distribution. In particular $H_\chi$ equals in law $\chi (\alpha)$ and 
\begin{align*}
\mathbf{P}_x(\gamma_t < H_\chi) = \mathbf{P}_x(\gamma_t < \chi (\alpha))
\end{align*}
from which the result follows.
\end{proof}

As a by-product of Corollary \ref{coro:lifeSpecial} we underline the following facts:
\begin{itemize}
\item[i)] If $Y_0 = x \in [0, \infty)$, then
\begin{align}
\label{zetaLifeSpecial}
\zeta \stackrel{law}{=} \tau_0 + \chi(\alpha/2).
\end{align}
Since 
\begin{align*}
\mathbf{P}_x(\zeta >t) = \mathbf{P}_x(\tau_0 >t) + \mathbf{P}_x(0 < \gamma_t < \chi(\alpha))
\end{align*}
we get that
\begin{align*}
\int_0^\infty e^{-\lambda t} \mathbf{P}_x(\zeta >t) \, dt 
= & \frac{1 - e^{-x\sqrt{\lambda}}}{\lambda} + \frac{\lambda^{\frac{\alpha}{2}-1}}{c+\lambda^{\frac{\alpha}{2}}} e^{-x \sqrt{\lambda}}\\
= & \frac{1}{\lambda} - \frac{1}{\lambda} \frac{c}{c + \lambda^\frac{\alpha}{2}} e^{-x \sqrt{\lambda}}.
\end{align*}
Now we observe that
\begin{align*}
\int_0^\infty e^{-\lambda t} \mathbf{P}_x(\tau_0 + \chi(\alpha/2) >t)\, dt = \frac{1}{\lambda} - \frac{1}{\lambda} \mathbf{E}_x[e^{-\lambda (\tau_0 + \chi(\alpha/2))}]
\end{align*}
where $\tau_0$ is independent from $\chi(\alpha/2)$. Moreover, 
\begin{align*}
\mathbf{E}_x[e^{-\lambda \tau_0}] = e^{-x \sqrt{\lambda}}.
\end{align*}
Since
\begin{align*}
\mathbf{E}[e^{-\lambda \chi(\alpha/2)}]
= & \int_0^\infty e^{-\lambda t} \frac{d}{dt}(1- E_\frac{\alpha}{2}(-ct^\frac{\alpha}{2})) dt\\
= & \lambda \left( \frac{1}{\lambda} - \frac{\lambda^{\frac{\alpha}{2}-1}}{c + \lambda^\frac{\alpha}{2}} \right)\\
= & \frac{c}{c + \lambda^\frac{\alpha}{2}}
\end{align*}
we get that
\begin{align}
\label{calcLIFEspecial}
\int_0^\infty e^{-\lambda t} \mathbf{P}_x(\tau_0 + \chi(\alpha/2) >t)\, dt = \frac{1}{\lambda} - \frac{1}{\lambda} \frac{c}{c+\lambda^\frac{\alpha}{2}} e^{-x \sqrt{\lambda}}.
\end{align}

\item[-] If $Y_0=0$, then
\begin{align*}
\mathbf{P}_0(\zeta > t) = E_\frac{\alpha}{2}(-c t^\frac{\alpha}{2}), \quad t\geq 0,
\end{align*} 
that is 
\begin{align*}
\zeta \stackrel{law}{=} \chi(\alpha /2).
\end{align*}
Indeed, fix $x=0$ and denote by $\gamma^{-1}$ the inverse of $\gamma$. Since $\gamma_t$ equals in law an inverse to a $1/2$-stable subordinator, the composition $\gamma^{-1}_{H_t}$ equals in law an $\alpha/2$-stable subordinator. Thus, $(\gamma_t < \chi (\alpha)) \equiv (t < \gamma^{-1}_{H_\chi})$ under $\mathbf{P}_0$. We conclude that,
for the process $Y$ started at $Y_0=0$, the lifetime $\zeta$ is identical in law to a Mittag-Leffler random variable of order $\alpha/2 \in (0, 1/2)$ with parameter $c\geq 0$. 
\end{itemize}

For a time-changed Markov process (that is for the fractional Cauchy problem \eqref{FCP} and the Cauchy problem involving a general generator) we have shown in \cite{DelRus} that its mean lifetime is given by $\mathbf{E}_x[H_\zeta]$ (for the composition with $L$) and by  $\mathbf{E}_x[L_\zeta]$ (for the composition with $H$) where $\zeta$ is the lifetime of the base process. The so-called delayed process (the process time-changed by $L$) has infinite mean lifetime and we say that the process spends an infinite mean amount of time in a domain. This is the case of the problem \eqref{FCP}, due to the fact that $\mathbf{E}_0[H_t]$ is infinite, we still have an infinite mean lifetime. The base process is delayed along its path. Here we obtain that the lifetime \eqref{zetaLifeSpecial} has a distribution related with a Mittag-Leffler random variable. In particular, for the process started at $x \in [0, \infty)$ the mean lifetime $\mathbf{E}_x[\zeta]$ is written in terms of $\mathbf{E}_x[\tau_0]$ and $\mathbf{E}_0[\chi(\alpha/2)]$. The corresponding elliptic problem does not exist for $c\neq 0$. Indeed, the mean value of $\chi(\alpha/2)$ turns out to be finite only if $c=0$ (recall that $\chi(\alpha/2)$ is a Mittag-Leffler random variable with parameters $c\geq 0$ and $\alpha/2 \in (0, 1/2]$). We have a finite mean lifetime for the process driven by $\partial_t u = - (-\Delta)^\alpha u$ for which we have a mean lifetime $\mathbf{E}_x[L_\zeta]$ with $\mathbf{E}_0[L_t] = C(\alpha)\, t^{\alpha}$ (see \cite{DelRus}).


\section{Some asymptotic results}
\label{sec:asymptotic}

We discuss the following results moving from the fact introduced by Feller (1958) as a conjecture (Feller conjecture) which has been well-studied after its statement. The boundary condition 
\begin{align*}
(1-\rho)\, u^\prime(0) = \rho\, u(0), \quad \rho \in [0, 1]
\end{align*}  
can be considered in order to study the Robin boundary condition and therefore, the limit cases corresponding to the Dirichlet and the Neumann boundary conditions. By restating the previous condition as
\begin{align*}
u^\prime(0) = \varrho \, u(0), \quad \varrho = \frac{\rho}{1-\rho}
\end{align*}
we formally get the Dirichlet condition as $\varrho \to \infty$ and the Neumann condition as $\varrho \to 0$. We now use similar arguments in case of the fractional boundary condition
\begin{align*}
\eta D^{\alpha/2}_t u(t,0) = \sigma \frac{\partial u}{\partial x}(t,0) - c\, u(t,0).
\end{align*}
We have the following formal cases (for the sake of simplicity we set $\eta=1$):
\begin{itemize}
\item[-] $\forall\, c > 0$, as $\sigma \to 0$ we get the result in the previous Corollary \ref{coro:special};
\item[-] $\forall\, c > 0$, as $\sigma \to \infty$ we formally get the Neumann condition $\partial_x u(t,0)= 0$. The solution takes the form
\begin{align*}
u(t,x) = Q^D_t \mathbf{1}_{[0, \infty)} + \int_0^t \frac{x}{\tau} g(\tau, x)\, d\tau. 
\end{align*}
Since 
\begin{align}
Q^D_t \mathbf{1}_{[0, \infty)} = 1 - \int_0^t \frac{x}{\tau} g(\tau, x)\, d\tau
\end{align}
we obtain that 
\begin{align*}
1=u(t,x)= \mathbf{P}_x(\zeta >t), \; t\geq 0, \; x \in [0, \infty).
\end{align*}
The process has infinite lifetime;
\item[-] $\forall \sigma \geq 0$ as $c\to 0$ we have that
\begin{align*}
u(t,x) = \mathbf{E}_x[\mathbf{1}_{[0, \infty)}(X_t)] = 1
\end{align*}
that is $\forall x$
\begin{align*}
\mathbf{P}_x(\zeta >t) = 1, \quad \forall t
\end{align*}
The lifetime is infinite. The process behaves like a Brownian motion reflected at $x=0$;
\item[-] $\forall\, \sigma \geq 0$, as $c\to \infty$ we get 
\begin{align*}
u(t,x) = \mathbf{E}_x[\mathbf{1}_{[0, \infty)}(X_t), \gamma_t=0] = \mathbf{E}_x[\mathbf{1}_{[0, \infty)}(X_t), t < \tau_0]
\end{align*}
where $\tau_0$ is the first time the process $X_t$ hits the point zero. Thus the reflecting Brownian motion $X$ is stopped at the random time $\tau_0$. We get $u(t,x) = \mathbf{P}_x(\tau_0 > t)$ and the corresponding process behaves like a Brownian motion killed on the boundary point $x=0$;
\item[-] Suppose that $c/\sigma \to \rho $ as $c,\sigma \to \infty$, then 
\begin{align*}
u(t,x) = \mathbf{E}_x[\mathbf{1}_{[0, \infty)}(X_t) \, e^{-\rho \gamma_t}]
\end{align*}
and
\begin{align*}
\mathbf{P}_x(\zeta >t) = \mathbf{P}_x(\gamma_t < \chi)
\end{align*}
where $\chi \sim Exp(\rho)$. The corresponding process behaves like an elastic Brownian motion.
\end{itemize}
Moreover, we may consider the following cases (for $\sigma, c \in (0,\infty)$):
\begin{itemize}
\item[-] as $\eta\to 0$, we have the Robin boundary condition;  
\item[-] as $\eta \to \infty$, we have the Neumann boundary condition  corresponding to
\begin{align*}
D^{\alpha/2}_t u(t,0)=0, \quad t>0.
\end{align*}
\end{itemize}


\section{Proof of the results}

\label{SecProof}

Let us consider 
\begin{align}
\label{SemigQ}
Q_t f(x) = \mathbf{E}_x[f(\widetilde{X}_t)]
\end{align}
where $\widetilde{X}_t$ with generator $(G, D(G))$ has been introduced in Section \ref{SecMAIN}. We obtain a characterization of \eqref{SemigQ} by exploiting the fact that $Q_t f(x) = u(t,x)$ can be written as
\begin{align}
\label{densityu}
u(t,x) = Q^D_t f(x) + \int_0^t \frac{x}{\tau} g(\tau,x)\, u(t-\tau, 0)\, d\tau 
\end{align}
where
\begin{align}
\label{semigQD}
Q^D_t f(x) = \int_0^\infty \big( g(t,x-y) - g(t, x+y) \big)\, f(y)\, dy
\end{align}
and $g(t,z)= e^{-z^2/4t} / \sqrt{4\pi t}$ is the Gaussian kernel. The semigroup \eqref{SemigQ} can written as
\begin{align*}
Q_t f(x) = \mathbf{E}_x[f(X_t) M_t]
\end{align*}
where $X_t$ is a Brownian motion reflected at $x=0$ and $M_t$ is the multiplicative functional associated with the boundary condition. Thus, $\widetilde{X}_t$ is the part process of $X_t$ on $[0, \infty)$ with lifetime given by $M_t$. The process $M_t$ is related to the additive functional $A_t = -\ln M_t$. In case of the elastic Brownian motion for instance, $A_t = c\, \gamma_t$ where $\gamma_t$ is the local time at $\{0\}$ for $X_t$. Let us write
\begin{align}
\label{repQ}
& \int_\mathbb{R} f(y) \mathbf{P}_x(X_t \in dy, \gamma_t \in dw)\\ 
= & \int_0^\infty f(y)\, \frac{x+y+w}{t} \, g(t, x+y+w)\,  dy\, dw + Q^D_t f(x)\, \delta(w)dw\notag . 
\end{align}
and
\begin{align}
\label{repQbar}
& \int_\mathbb{R} f(y) \mathbf{P}_x(X_t \in dy, \bar{L}_{\gamma_t} \in dw) \\
= & \int_0^\infty f(y)\left( \int_0^\infty \frac{x+y+z}{t} \, g(t, x+y+z)\, \bar{l}(z,w)\, dz \right) dy\, dw\notag \\
& + Q^D_t f(x)\, \delta(w)dw\notag . 
\end{align}
From \eqref{repQ}, we have that
\begin{align*}
Q_t f(x) = \int_0^\infty \int_0^\infty f(y) \, e^{-c w}\, \mathbf{P}_x(X_t \in dy, \gamma_t \in dw) + Q_t^D f(x)
\end{align*}
for which $Q_t Q_s f(x) = Q_{t+s}f(x)$. Indeed, the elastic Brownian motion is a (strong) Markov process for which $Q_{t+s}f$ is gievn by
\begin{align}
\mathbf{E}_x[f(X_{t+s}) g(\gamma_{t+s})]
= & \mathbf{E}_x[\mathbf{E}_0[f(X_{t+s} - X_s + X_s) g(\gamma_{t+s} - \gamma_s + \gamma_s) | X_s]]\notag \\ 
= & \mathbf{E}_x[\mathbf{E}_0 [ f(X_t + X_s) g(\gamma_t + \gamma_s) | X_s]] \notag \\
= & \mathbf{E}_x [\mathbf{E}_{X_s}[f(X_t) g(\gamma_t)]]
\end{align}
with $g(s)=e^{-cs}$. Let us introduce 
\begin{align}
\label{relationK}
K_\alpha( \kappa s ) = \mathbf{E}_0[e^{- \kappa \bar{L}_{s}}], \quad s \geq 0, \quad \kappa \geq 0.
\end{align}
From \eqref{repQbar}, we can analogously write $\bar{Q}_t f(x) := \mathbf{E}_x[f(X_t) \bar{M}_t]$, that is
\begin{align*}
\bar{Q}_t f(x) 
= & \int_0^\infty \int_0^\infty f(y) \, e^{-\frac{c}{\eta} w}\, \mathbf{P}_x(X_t \in dy, \bar{L}_{\gamma_t} \in dw) + Q_t^D f(x)\\
= & \int_0^\infty \int_0^\infty f(y) \, K_\alpha( \frac{c}{\eta} z)\, \mathbf{P}_x(X_t \in dy, \gamma_t \in dz) + Q_t^D f(x).
\end{align*}
As $\alpha = 1$ formula \eqref{relationK} takes the form $K_1(\kappa s) = e^{-\kappa s}$ and $Q_t=\bar{Q}_t$. Indeed, $\bar{L}_t \to t$ almost surely as $\alpha \to 1$. Since $|K_\alpha| \leq 1$ for any $\alpha \in (0,1]$ we therefore have that
\begin{align*}
|\mathbf{E}_x[f(X_t) K_\alpha(\frac{c}{\eta} \gamma_t)]| \leq \mathbf{E}_x[| f(X_t)|] \leq \|f\|_{\infty} \quad \forall\, x \in [0, \infty)
\end{align*}
from which we deduce that $\|\bar{Q}_t f\|_{\infty} \leq \|f\|_\infty$ for any $f \in C_b([0, \infty))$ where $\|f\|_\infty = \sup_x |f(x)|$. Thus $|\bar{Q}_t | \leq 1$ and $\bar{Q}_t$ is bounded as expected.\\

In the special case $\sigma=0$ of Section \ref{SecSpecial}, the previous arguments lead to $K_\beta(cs) = E_\beta(-c s^\beta)$, $s \geq0$, $\beta=\alpha/2$, $c\geq 0$ for which
\begin{align*}
K_\alpha( \kappa s ) = \mathbf{E}_0 [K_1(\kappa\, L_s)], \quad s \geq 0, \quad \kappa \geq 0.
\end{align*}

For the sake of completeness we provide the proof of the following result which will be useful further on. The result has been stated in \cite{BerBook} without the proof.
\begin{lemma}
For the densities $l$ and $h$ of the processes $L$ and $H$, it holds that 
\label{BertoinNOproof}
\begin{align*}
\alpha \frac{x}{t} l(t, x) = h(x, t), \quad t >0, \; x >0.
\end{align*}
\end{lemma}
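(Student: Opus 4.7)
The plan is to verify the identity by matching the Laplace transforms in $t$ of both sides, using only the transforms already recorded in Section \ref{sec:2}. Since $H$ is the $\alpha$-stable subordinator with Laplace exponent $\lambda^\alpha$, the density $t \mapsto h(x,t)$ of $H_x$ has
\begin{equation*}
\int_0^\infty e^{-\lambda t} h(x,t)\,dt = \mathbf{E}_0[e^{-\lambda H_x}] = e^{-x \lambda^\alpha}, \quad \lambda > 0.
\end{equation*}
This is the target the Laplace transform of the left-hand side must reproduce.

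For the left-hand side I would combine \eqref{Lapl} with the elementary identity $\int_0^\infty e^{-\lambda t} f(t)/t\,dt = \int_\lambda^\infty \widehat{f}(\mu)\,d\mu$, valid by Fubini whenever $t \mapsto f(t)/t \in L_1((0,\infty))$. Applying this with $f(t) = l(t,x)$ and using \eqref{Lapl} gives
\begin{align*}
\int_0^\infty e^{-\lambda t}\, \frac{\alpha x}{t}\, l(t,x)\,dt
&= \alpha x \int_\lambda^\infty \mu^{\alpha-1} e^{-x \mu^\alpha}\,d\mu \\
&= \int_{x \lambda^\alpha}^\infty e^{-v}\,dv = e^{-x\lambda^\alpha},
\end{align*}
where the substitution $v = x\mu^\alpha$, $dv = \alpha x\, \mu^{\alpha-1} d\mu$ was used. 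Since the two Laplace transforms coincide for every $\lambda>0$ and both $h(x,\cdot)$ and $\alpha (x/\cdot)\, l(\cdot,x)$ are continuous on $(0,\infty)$, injectivity of the Laplace transform yields the pointwise identity.

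The only delicate point is the justification of the Fubini step, i.e.\ the integrability of $t \mapsto l(t,x)/t$ on $(0,\infty)$ for fixed $x>0$. For large $t$ this is harmless since $l(t,x)$ is a bounded probability density and $1/t$ is integrable at infinity in combination with the tail; for small $t$, since $x>0$ and $L_t \to 0$ almost surely as $t \to 0$, the density $l(t,x)$ vanishes fast enough at $t=0$ to absorb the $1/t$ factor. Should this estimate need more detail, an entirely algebraic alternative is available: by self-similarity $H_s \stackrel{law}{=} s^{1/\alpha} H_1$ one has $h(s,t) = s^{-1/\alpha} h_1(t s^{-1/\alpha})$, and differentiating $\mathbf{P}(L_t \leq s) = \mathbf{P}(H_1 \geq t s^{-1/\alpha})$ in $s$ yields $l(t,s) = (t/\alpha)\, s^{-1-1/\alpha} h_1(t s^{-1/\alpha})$; the identity $\alpha (s/t)\, l(t,s) = s^{-1/\alpha} h_1(t s^{-1/\alpha}) = h(s,t)$ then follows by direct comparison, bypassing the Fubini argument altogether.
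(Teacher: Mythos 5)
Your proof is correct, but it takes a different route from the paper's. The paper computes the \emph{double} Laplace transform of $\alpha\,l(t,x)$ in $(t,x)$, obtaining $\alpha\lambda^{\alpha-1}/(\xi+\lambda^{\alpha})$, and then recognises this as $-\tfrac{d}{d\lambda}$ of the double transform of $\tfrac{1}{x}h(x,t)$ — so the factor $x$ is absorbed by the $\xi$-transform and the factor $t$ is produced by differentiating in $\lambda$; uniqueness of the two-dimensional transform then gives the identity. You instead keep $x$ fixed and take a single Laplace transform in $t$, handling the $1/t$ via the identity $\int_0^\infty e^{-\lambda t}f(t)/t\,dt=\int_\lambda^\infty \widehat f(\mu)\,d\mu$ together with \eqref{Lapl}, landing directly on $e^{-x\lambda^\alpha}=\mathbf{E}_0[e^{-\lambda H_x}]$. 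This is cleaner in one respect: uniqueness is only invoked in one variable, and since $l(\cdot,x)\ge 0$ your ``delicate'' Fubini step is actually free — Tonelli applies to nonnegative integrands, and the finiteness of the resulting integral $e^{-x\lambda^\alpha}<\infty$ settles the integrability of $t\mapsto l(t,x)/t$ a posteriori, so the paragraph of worry about small-$t$ and large-$t$ behaviour can be deleted. Your closing self-similarity argument ($h(s,t)=s^{-1/\alpha}h_1(ts^{-1/\alpha})$ and differentiation of $\mathbf{P}(L_t\le s)=\mathbf{P}(H_1\ge ts^{-1/\alpha})$) is a genuinely distinct, fully elementary proof that avoids transforms entirely; it is arguably the shortest correct derivation of the lemma, at the cost of using a structural property (strict $\alpha$-stability) that the transform arguments do not need and that would not survive the passage to the drifted subordinator $\bar H$ studied later in the paper.
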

\begin{proof}
\begin{align*}
\int_0^\infty \int_0^\infty e^{-\lambda t- \xi x} \alpha\, l(t,x)\, dx\, dt 
= & \int_0^\infty e^{-\lambda t} \alpha\, E_\alpha(- \xi t^\alpha)\, dt\\
= & \alpha \frac{\lambda^{\alpha - 1}}{\xi + \lambda^\alpha}\\
= & \int_0^\infty e^{-\xi x} \alpha\, \lambda^{\alpha-1} e^{-x \lambda^\alpha}\, dx\\
= & - \frac{d}{d\lambda} \int_0^\infty e^{-\xi x} \frac{1}{x} e^{-x \lambda^\alpha}\, dx\\
= & - \frac{d}{d\lambda} \int_0^\infty \int_0^\infty e^{-\xi x -\lambda t} \frac{1}{x} h(x, t)\, dt\, dx\\
= & \int_0^\infty \int_0^\infty e^{-\xi x -\lambda t} \frac{t}{x} h(x, t)\, dt\, dx
\end{align*}
\end{proof}

Moreover, we are in need of the following facts.

\begin{lemma}
\label{lemmaConvolution}
We have that
\begin{align}
\label{lmConv1}
\int_0^t \frac{x}{t- \tau} g(t-\tau, x)\, g(\tau, y)\, d\tau = g(t, x+y)
\end{align}
and
\begin{align}
\label{lmConv2}
\int_0^t \frac{x}{t- \tau} g(t-\tau, x)\, \frac{y}{\tau}
 g(\tau, y)\, d\tau = \frac{x+y}{t} g(t, x+y).
\end{align}
\end{lemma}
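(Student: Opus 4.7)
The plan is to verify both identities by Laplace transform in $t$, exploiting two classical facts about the Gaussian kernel $g(t,z) = (4\pi t)^{-1/2} e^{-z^2/(4t)}$: for $z \geq 0$ and $\lambda > 0$,
\[
\int_0^\infty e^{-\lambda t} g(t,z)\, dt = \frac{e^{-z\sqrt{\lambda}}}{2\sqrt{\lambda}}, \qquad \int_0^\infty e^{-\lambda t}\, \frac{z}{t}\, g(t,z)\, dt = e^{-z\sqrt{\lambda}}.
\]
The first is the standard Laplace identity for the heat kernel; the second reflects the fact that $\frac{z}{t} g(t,z)$ is the density of the first-passage time to level $z>0$ of a Brownian motion with generator $\Delta$, whose Laplace transform is the classical $e^{-z\sqrt{\lambda}}$. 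Both can also be obtained by direct integration via the substitution $s = z/(2\sqrt{t})$.

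For \eqref{lmConv1}, the substitution $\tau \mapsto t-\tau$ in the left-hand side produces the time convolution of $\tau \mapsto \frac{x}{\tau} g(\tau,x)$ with $\tau \mapsto g(\tau,y)$, evaluated at $t$. Its Laplace transform factors as
\[
e^{-x\sqrt{\lambda}} \cdot \frac{e^{-y\sqrt{\lambda}}}{2\sqrt{\lambda}} = \frac{e^{-(x+y)\sqrt{\lambda}}}{2\sqrt{\lambda}},
\]
which is exactly the Laplace transform of $g(\cdot,x+y)$; uniqueness of the Laplace transform (both sides being continuous in $t > 0$) then delivers \eqref{lmConv1}. The same strategy proves \eqref{lmConv2}: after the identical change of variable, the left-hand side is the convolution of $\frac{x}{\cdot}\,g(\cdot,x)$ with $\frac{y}{\cdot}\,g(\cdot,y)$, so its Laplace transform equals $e^{-x\sqrt{\lambda}} \cdot e^{-y\sqrt{\lambda}} = e^{-(x+y)\sqrt{\lambda}}$, which coincides with that of $\frac{x+y}{t}\, g(t,x+y)$.

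Equivalently (and perhaps more suggestively), these identities record two standard facts for a Brownian motion $B$ with generator $\Delta$ and first-passage times $\tau_z = \inf\{t\geq 0 : B_t = z\}$: (i) the decomposition $g(t,x+y) = \int_0^t \mathbf{P}_0(\tau_x \in d\tau)\, g(t-\tau,y)$, valid for $y>0$ because $\{B_t \in d(x+y)\}\subset \{\tau_x \leq t\}$, which is \eqref{lmConv1} after the substitution; and (ii) $\tau_{x+y} \stackrel{law}{=} \tau_x + \tau'_y$ with $\tau'_y$ an independent copy of $\tau_y$, coming from the strong Markov property at $\tau_x$, which gives \eqref{lmConv2} upon differentiating the convolution of densities. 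No substantial obstacle is expected here: the only care required is to track the substitution $\tau \mapsto t-\tau$ that rewrites the stated asymmetric integrals as standard convolutions, after which either the Laplace approach or the probabilistic approach closes the proof immediately.
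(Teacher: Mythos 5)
Your proof is correct and follows essentially the same route as the paper: both verify the identities by taking the Laplace transform in $t$, using $\int_0^\infty e^{-\lambda t} g(t,z)\,dt = \tfrac{1}{2}\lambda^{-1/2}e^{-z\sqrt{\lambda}}$ and $\int_0^\infty e^{-\lambda t}\tfrac{z}{t}g(t,z)\,dt = e^{-z\sqrt{\lambda}}$, and then factoring the convolution. The only cosmetic difference is that you justify the second transform via the Brownian first-passage density, while the paper invokes its Lemma \ref{BertoinNOproof} on the $1/2$-stable subordinator and its inverse --- these are the same fact in different clothing.
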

\begin{proof}
We recall that $2g(t,x)$ can be regarded as the density of an inverse to $1/2$-stable subordinator. Thus,
\begin{align*}
\int_0^\infty e^{-\lambda t} g(t,x)\, dt = \frac{1}{2} \lambda^{\frac{1}{2}-1} e^{-x \sqrt{\lambda}}
\end{align*}
and, in view of Lemma \ref{BertoinNOproof}, 
\begin{align*}
\int_0^\infty e^{-\lambda t} \frac{x}{t}g(t,x)\, dt = e^{-x \sqrt{\lambda}}.
\end{align*}
The convolution \eqref{lmConv1} leads to the Laplace transform
\begin{align*}
\frac{1}{2} \lambda^{\frac{1}{2}-1} e^{-(x+y)\sqrt{\lambda}} = \int_0^\infty e^{-\lambda t} g(t,x+y)\, dt
\end{align*}
where the identity is justified once again by Lemma \ref{BertoinNOproof}.

Formula \eqref{lmConv2} can be obtained by considering the same arguments and the fact that
\begin{align*}
e^{-(x+y)\sqrt{\lambda}} = \int_0^\infty e^{-\lambda t} \frac{x+y}{t} g(t,x+y)\, dt.
\end{align*}
\end{proof}

We move to the proof of the main result of the work.

\begin{proof}[Proof of Theorem \ref{thm:MAIN}]
Let us assume that the solution to \eqref{eqMAIN} has the probabilistic representation $u(t,x) = \mathbf{E}_x[f(X_t)\bar{M}_t]$. Observe that 
\begin{align*}
\mathbf{E}_x[f(X_t)\bar{M}_t] = \mathbf{E}_x[f(X_t)\bar{M}_t, \gamma_t=0] + \mathbf{E}_x[f(X_t)\bar{M}_t, \gamma_t>0]
\end{align*}
which can be written in terms of the first time the process $X$ hits the point $x=0$, that is
\begin{align*}
\tau_0 = \inf \{t \geq 0\,:\, X_t = 0\}.
\end{align*}
Let us introduce the $\lambda$-potential
\begin{align*}
R_\lambda f(x) = \mathbf{E}_x \left[ \int_0^\infty e^{-\lambda t} f(X_t)\bar{M}_t\, dt \right], \quad \lambda >0
\end{align*}
for which we write
\begin{align*}
R_\lambda f(x) = R^D_\lambda f(x) + \bar{R}_\lambda f(x)
\end{align*}
where
\begin{align*}
R^D_\lambda f(x) = \mathbf{E}_x\left[ \int_0^{\tau_0} e^{-\lambda t} f(X_t) \bar{M}_t\, dt \right]
\end{align*}
and
\begin{align*}
\bar{R}_\lambda f(x) = \mathbf{E}_x \left[ \int_{\tau_0}^\infty e^{-\lambda t} f(X_t) \bar{M}_t\, dt \right].
\end{align*}
We have that
\begin{align*}
\Delta R_\lambda f(x) 
= & \lambda R_\lambda f(x) - f(x)\\
= & \left( \lambda R^D_\lambda f(x) - f(x) \right) + \lambda \bar{R}_\lambda f(x)
\end{align*}
with
\begin{align*}
\Delta R^D_\lambda f(x) = \lambda R^D_\lambda f(x) - f(x) \quad \textrm{and} \quad \Delta \bar{R}_\lambda f(x) = \lambda \bar{R}_\lambda f(x).
\end{align*}
On the other hand
\begin{align*}
\int_0^\infty e^{-\lambda t} D^{\alpha/2}_t u(t,x)\, dt 
= & \lambda^{\alpha/2} R_\lambda f(x) - \lambda^{\alpha/2-1} f(x) \\
= & \lambda^{\alpha/2} R^D_\lambda f(x) + \lambda^{\alpha/2-1} \left( \lambda \bar{R}_\lambda f(x) - f(x) \right)
\end{align*}
with
\begin{align*}
R^D_\lambda f(x) \big|_{x=0} = 0.
\end{align*}
Thus, the potential $\bar{R}_\lambda$ completely characterizes the boundary value problem in terms of the fractional derivative. 

The Laplace transform of \eqref{densityu} is given by
\begin{align*}
\widetilde{u} (\lambda, x) 
= & \int_0^\infty e^{-\lambda t} u(t,x)\, dt\\
= & \int_0^\infty e^{-\lambda t} Q^D_t f(x)\, dt + \int_0^\infty e^{-\lambda t} \int_0^t \frac{x}{\tau} g(\tau, x) \, u(t-\tau, 0)\, d\tau\, dt
\end{align*}
where 
\begin{align*}
\int_0^\infty e^{-\lambda t} \int_0^t \frac{x}{\tau} g(\tau, x) \, u(t-\tau, 0)\, d\tau\, dt = e^{-x \sqrt{\lambda}}\, \widetilde{u}(\lambda, 0) = \bar{R}_\lambda f(x).
\end{align*}
We obviously have that
\begin{align*}
\lambda R^D_\lambda f(x) \to f(x) \quad \textrm{as} \quad \lambda \to \infty
\end{align*}
whereas, as $\lambda \to \infty$
\begin{align*}
\lambda \bar{R}_\lambda f(x) = e^{-x \sqrt{\lambda}} \lambda R_\lambda f(0)  \to 
\left\lbrace
\begin{array}{ll}
\displaystyle f(0), & x=0\\
\displaystyle 0, & x>0
\end{array}
\right .
\end{align*}
assuming that $\lambda R_\lambda \to f$ as $\lambda \to \infty$. Our assumption is therefore on the continuity of the paths of the involved process, that is
\begin{align*}
\lim_{t\to 0^+} \mathbf{E}_x[f(X_t) e^{-c \bar{L}_{\gamma_t}}] = f(x)
\end{align*}
which is verified (for the analytic check see the formula \eqref{uZEROpotential} below).

Now we observe that
\begin{align}
\label{usefulBC}
&\int_0^\infty e^{-|x-y|} f(y)\, dy \notag \\
= & \int_0^x e^{-(x-y)} f(y)\, dy + \int_x^\infty e^{-(y-x)} f(y)\, dy\notag \\
= & \int_0^x e^{-(x-y)} f(y)\, dy + \int_0^\infty e^{-(y-x)}f(y)\, dy - \int_0^x e^{-(y-x)} f(y)\, dy.
\end{align}
With \eqref{usefulBC} in mind, we write
\begin{align*}
\int_0^\infty e^{-\lambda t} Q^D_t f(x)\, dt
= & \frac{1}{2} \int_0^\infty \left( \frac{e^{-|x-y|\sqrt{\lambda}}}{\sqrt{\lambda}} - \frac{e^{-(x+y)\sqrt{\lambda}}}{\sqrt{\lambda}} \right) f(y)\, dy\\
= & \frac{1}{2} \int_0^\infty \frac{e^{(x-y)\sqrt{\lambda}}}{\sqrt{\lambda}} f(y)\, dy - \frac{1}{2} \int_0^\infty \frac{e^{-(x+y)\sqrt{\lambda}}}{\sqrt{\lambda}} f(y)\, dy \\
& - \frac{1}{2} \int_0^x \left( \frac{e^{(x-y)\sqrt{\lambda}}}{\sqrt{\lambda}} - \frac{e^{-(x-y)\sqrt{\lambda}}}{\sqrt{\lambda}} \right) f(y)\, dy.
\end{align*}
From this standard calculation we obtain $\widetilde{u}(\lambda, x)$ written in a convenient form for our purposes. A quick check shows that
\begin{align*}
\Delta \widetilde{u}(\lambda, x) = \lambda \, \widetilde{u}(\lambda, x) - f(x) = \int_0^\infty e^{-\lambda t} \frac{\partial u}{\partial t} (t,x)\, dt.
\end{align*}

The derivative \eqref{def:caputoDer} is defined as a convolution operator. Thus, the Laplace transform is given by
\begin{align*}
\int_0^\infty e^{-\lambda t} D^{\alpha/2}_t u(t,x)\, dt 
= & \lambda^{\alpha/2 -1} \left(\lambda \widetilde{u}(\lambda,x) - u(0,x) \right) \\
= & \lambda^{\alpha/2} \widetilde{u}(\lambda, x) - \lambda^{\alpha/2 -1} f(x), \quad \lambda>0.
\end{align*}
By taking into account the boundary condition, we arrive at the identity 
\begin{align}
\eta \lambda^{\alpha/2} \widetilde{u}(\lambda, x) - \eta \lambda^{\alpha/2 -1}  f(x) \bigg|_{x=0} = \sigma \frac{\partial \widetilde{u}}{\partial x}(\lambda, x) - c \widetilde{u}(\lambda, x) \bigg|_{x=0}
\end{align}
from which we obtain 
\begin{align}
\label{uZEROpotential}
\widetilde{u}(\lambda, 0) = \frac{\sigma \int_0^\infty e^{-y \sqrt{\lambda}} f(y) dy + \eta \lambda^{\alpha/ 2-1} f(0)}{c + \sigma \sqrt{\lambda} + \eta \lambda^{\alpha/2}},
\end{align}
that is,
\begin{align*}
\bar{R}_\lambda f(x) \big|_{x=0}.
\end{align*}
Since $f(x)=\mathbf{1}_{[0, \infty)}$ we write
\begin{align}
\widetilde{u}(\lambda,0) 
= & \left( \sigma \lambda^{1/2 - 1}  + \eta \lambda^{\alpha/ 2-1}\right) \int_0^\infty e^{-c w} e^{-\sigma w \sqrt{\lambda}} e^{-w \eta  \lambda^{\alpha/2}} dw \notag \\
= & \frac{\sigma \lambda^{1/2} + \eta \lambda^{\alpha/2}}{\lambda} \int_0^\infty e^{-c w} e^{-\sigma w \sqrt{\lambda}} e^{-w \eta  \lambda^{\alpha/2}} dw. \label{uZEROteleg}
\end{align}
Actually, this is the real advantage in studying $f=\mathbf{1}$. We observe that, for a general datum $f \in C_b([0, \infty))$, the solution can not be written as $\bar{Q}_t f$. Let us write
\begin{align*}
\widetilde{U}_1 (\lambda) = \sigma \int_0^\infty  e^{-c w}  \, \lambda^{1/2-1}  e^{-w \eta \lambda^{\alpha/2}} e^{-\sigma w \sqrt{\lambda}} \, dw\, dy
\end{align*}
and
\begin{align*}
\widetilde{U}_2(\lambda) = \eta \int_0^\infty e^{-c w} \, \lambda^{\alpha/ 2-1} e^{- w \eta \lambda^{\alpha/2}} e^{-\sigma w \sqrt{\lambda}}\, dw
\end{align*}
We use the fact that
\begin{align}
& \displaystyle \int_0^\infty e^{-\lambda t} \frac{w}{t}g(t,w)\, dt = e^{- w\sqrt{\lambda}}\label{f0U}\\
& \displaystyle \int_0^\infty e^{-\lambda t} g(t,w)\, dt = \lambda^{1/2-1} e^{- w\sqrt{\lambda}}\label{f1U}\\
& \displaystyle \int_0^\infty e^{-\lambda t} \int_0^\infty \frac{s}{t} g(t, s)\, \alpha \frac{w}{s} l(s,w)\, ds\, dt = e^{-w \lambda^{\alpha/2}} \label{f2U}\\
& \displaystyle \int_0^\infty e^{-\lambda t} \int_0^\infty g(t,s)\, l(s,w)\, ds\, dt = \lambda^{\alpha/2 -1} e^{-w \lambda^{\alpha/2}} \label{f3U}
\end{align}
and we obtain the inverse Laplace transforms $U_1(t)$, $U_2(t)$ by means of which we write
\begin{align}
\label{AllAtOnce}
\int_0^t \frac{x}{t-\tau} g(t-\tau, x)\, u(\tau,0)\, d\tau = \int_0^t \frac{x}{t-\tau} g(t-\tau, x)\,  \left( U_1(\tau) + U_2(\tau) \right) \, d\tau.
\end{align}
In particular, for \eqref{AllAtOnce} we show that
\begin{align*}
\int_0^\infty e^{-\lambda t} \left( \int_0^t \frac{x}{t-\tau} g(t-\tau, x)\,  \left( U_1(\tau) + U_2(\tau) \right) \, d\tau \right)dt = \bar{R}_\lambda f(x).
\end{align*}

\noindent {\it i) The Laplace transform $\widetilde{U}_1(\lambda)$}.\\

From \eqref{f1U} and \eqref{f2U},
\begin{align*}
U_1(t) = \sigma \int_0^\infty e^{-c w} \int_0^t g(t-\tau, \sigma w) \int_0^\infty \frac{s}{\tau} g(\tau, s)\, \alpha \frac{\eta w}{s} l(s, \eta w)\, ds\, d\tau\, dw
\end{align*}
By considering Lemma \ref{lemmaConvolution}, we get
\begin{align*}
\int_0^t g(t-\tau, \sigma w) \frac{s}{\tau} g(\tau, s)\, d\tau = g(t, s+\sigma w)
\end{align*}
by means of which we write the inverse of $\widetilde{U}_1(\lambda)$, that is
\begin{align*}
U_1(t) = \sigma \int_0^\infty  e^{-c w} \int_0^\infty g(t, s+\sigma w)\, \alpha \frac{\eta w}{s} l(s,\eta w)\, ds\, dw
\end{align*}
By applying once again Lemma \ref{lemmaConvolution},
\begin{align}
& \int_0^t \frac{x}{t-\tau} g(t-\tau, x) U_1(\tau)\, d\tau \label{onceAgain1}\\
= & \sigma \int_0^\infty e^{-c w} \int_0^\infty g(t, x+s+\sigma w)\, \alpha \frac{\eta w}{s} l(s,\eta w)\, ds\, dw \notag\\
= & \sigma \int_0^\infty e^{-c w} \int_{\sigma w}^\infty g(t, x+z)\, \alpha \frac{\eta w}{z-\sigma w} l(z-\sigma w,\eta w)\, dz\, dw. \notag
\end{align}

\noindent {\it ii) The Laplace transform $\widetilde{U}_2(\lambda)$}.
\\

From \eqref{f0U} and \eqref{f3U},
\begin{align*}
U_2(t) 
= & \eta \int_0^\infty e^{-cw} \int_0^t \frac{\sigma w}{t-\tau} g(t-\tau, \sigma w) \int_0^\infty g(\tau, z)\, l(z,\eta w)\, dz\, d\tau\\
= & \eta \int_0^\infty \int_0^\infty e^{-cw} \left( \int_0^t \frac{\sigma w}{t-\tau} g(t-\tau, \sigma w) \, g(\tau, z)\, d\tau \right)  l(z,\eta w)\, dz
\end{align*}
By considering Lemma \ref{lemmaConvolution}, we get
\begin{align*}
\int_0^t \frac{\sigma w}{t-\tau} g(t-\tau, \sigma w) \,  g(\tau, z)\, d\tau = g(t, z + \sigma w)
\end{align*}
and
\begin{align*}
U_2(t) = \eta \int_0^\infty \int_0^\infty e^{-cw}\, g(t, z+\sigma w) \, l(z,\eta w)\, dz\, dw
\end{align*}
By applying once again Lemma \ref{lemmaConvolution},
\begin{align}
& \int_0^t \frac{x}{t-\tau} g(t-\tau, x)\, U_2(\tau)\, d\tau \label{onceAgain2}\\
= & \eta \int_0^\infty e^{-c w}  \int_0^\infty  \, g(t, x+z+\sigma w)\, l(z, \eta w)\, dz\, dw \notag\\
= & \eta \int_0^\infty e^{-c w}  \int_{\sigma w}^\infty   \, g(t, x+z)\, l(z-\sigma w, \eta w)\, dz\, dw \notag
\end{align}

\noindent {\it iii) The solution $u(t,x)$}.\\

We get that
\begin{align*}
& u(t,0) \\
= & \int_0^\infty e^{-c w} \int_{\sigma w}^\infty g(t, z) \left(\sigma \alpha \frac{\eta w}{z-\sigma w} l(z-\sigma w, \eta w) + \eta \, l(z-\sigma w, \eta w) \right)dz\, dw
\end{align*}
and 
\begin{align*}
u(t,x) = Q^D_t \mathbf{1}_{[0, \infty)}(x) + \int_0^t \frac{x}{t-\tau} g(t-\tau, x) u(t,0)\, d\tau
\end{align*}
where, by considering \eqref{AllAtOnce} together with \eqref{onceAgain1} and \eqref{onceAgain2}, 
\begin{align*}
& \int_0^t \frac{x}{t-\tau} g(t-\tau, x) u(t,0)\, d\tau\\
= &  \int_0^\infty e^{-c w} \int_{\sigma w}^\infty g(t, x + z) \left(\sigma \alpha \frac{\eta w}{z-\sigma w} l(z-\sigma w, \eta w) + \eta \, l(z-\sigma w, \eta w) \right)dz
\end{align*}

\noindent {\it iv) The inverse process $\bar{L}_t$}.\\

Let us consider $\eta=1$ and denote by $\bar{l}$ the function
\begin{align*}
\bar{l}(t,w) = \sigma \alpha \frac{w}{t-\sigma w} l(t-\sigma w, w) + l(t-\sigma w, w), \quad t\geq \sigma w \geq 0.
\end{align*}
We notice that
\begin{align*}
 \alpha \frac{w}{t-\sigma w} l(t-\sigma w, w) = \int_0^t \delta(\tau - \sigma w)\, \alpha \frac{w}{t-\tau} l(t-\tau, w)\, d\tau
\end{align*}
where, by Lemma \ref{BertoinNOproof} and formula \eqref{symbStable},
\begin{align*}
\int_0^\infty e^{-\lambda t} \alpha \frac{w}{t} l(t, w)\, dt = \int_0^\infty e^{-\lambda t} h(w, t)\, dt = e^{-w \lambda^\alpha}.
\end{align*}
Moreover,
\begin{align*}
l(t-\sigma w, w) = \int_0^t \delta(\tau- \sigma w)\, l(t-\tau, w)\, d\tau
\end{align*}
where, by formula \eqref{Lapl}, 
\begin{align*}
\int_0^\infty e^{-\lambda t} l(t,w)\, dt = \lambda^{\alpha-1} e^{-w \lambda^\alpha}.
\end{align*}
Thus, by collecting the previous points, 
\begin{align*}
\bar{l}(t,w) = \int_0^t \delta(\tau-\sigma w) \left(\sigma \alpha \frac{w}{t-\tau} l(t-\tau, w) + l(t-\tau, w) \right)d\tau
\end{align*}
from which we get
\begin{align*}
\int_0^\infty e^{-\lambda t} \bar{l}(t,w) \, ds 
= & e^{-\sigma w \lambda} \left( \sigma e^{-w \lambda^\alpha} + \lambda^{\alpha-1} e^{-w \lambda^\alpha} \right)\\ 
= & (\sigma +\lambda^{\alpha-1}) \, e^{- \sigma w \lambda - w \lambda^\alpha}.
\end{align*}
Since $\bar{H}_w = \sigma w + H_{w}$, $w\geq 0$, in view of the relation
\begin{align*}
\mathbf{P}_0(\bar{H}_w < t) = \mathbf{P}_0(\bar{H}^{-1}_t > w) 
\end{align*}
where $ \bar{H}^{-1}_t = \inf \{w\geq 0\,:\, \bar{H}_w > t\}$, 
from the formula 
\begin{align*}
\int_0^\infty e^{-\lambda t} \mathbf{P}_0(\bar{H}_w < t)\, dt = \frac{1}{\lambda} \mathbf{E}_0[e^{-\lambda \bar{H}_w}] = \frac{1}{\lambda} e^{-\sigma w \lambda - w \lambda^\alpha}
\end{align*}
we write
\begin{align*}
- \frac{d}{dw} \int_0^\infty e^{-\lambda t} \mathbf{P}_0(\bar{H}^{-1}_t > w)\,dt 
= &  - \frac{d}{dw} \int_0^\infty e^{-\lambda t} \mathbf{P}_0 (\bar{H}_w < t)\, dt \\
= & \frac{\sigma \lambda + \lambda^\alpha}{\lambda} e^{-w \sigma \lambda - w \lambda^\alpha}\\
= & \int_0^\infty e^{-\lambda t} \bar{l}(t,w)\, dt.
\end{align*}
Thus, we conclude that
\begin{align*}
\mathbf{P}_0(\bar{H}^{-1}_t \in dw) = \bar{l}(t,w)\, dw
\end{align*}
where $\bar{H}^{-1}_t \stackrel{law}{=} \bar{L}_t$ is an inverse to the process
\begin{align*}
\bar{H}_t = \sigma t + H_t, \quad t\geq 0, \quad \eta=1.
\end{align*}
A change of variable shows that, for $\eta \geq 0$, 
\begin{align*}
\mathbf{P}_0( \eta^{-1} \bar{L}_t \in dw) = \bar{l}(t,w)\, dw
\end{align*}
where 
\begin{align*}
\bar{l}(t,w) = \sigma \alpha \frac{\eta w}{t-\sigma w} l(t-\sigma w, \eta w) + \eta \, l(t-\sigma w, \eta w), \quad t\geq \sigma w, \quad w>0
\end{align*}
and $\bar{L}_t$ is an inverse to the process
\begin{align*}
\bar{H}_t = \frac{\sigma}{\eta} t + H_t, \quad t\geq 0, \quad \eta \geq 0.
\end{align*}

\end{proof}


\end{document}